\newtheorem{theorem}{\bf  {Theorem}}
\newtheorem{lemma}[theorem]{\bf Lemma}
\newtheorem{definition}[theorem]{\bf Definition}
\newtheorem{corollary}[theorem]{\bf Corollary}
\newtheorem{remark}[theorem]{\bf Remark}
\newtheorem{assumption}[theorem]{\bf Assumption}
\numberwithin{theorem}{section}
\definecolor{darkred}{rgb}{0.7,0,0}
\definecolor{darkblue}{rgb}{0,0,0.7}
\def\onorm#1{| #1|_0}
\newcommand{\VERT}{\vert\!\vert\!\vert}
\newcommand{\les}{\lesssim}
\newcommand{\defeq}{\stackrel{\mathclap{\mbox{\tiny def}}}{=}}
\newcommand{\Torus}{\mathbb{T}^3}
\newcommand{\DTorus}{{\mathbb{T}_\varepsilon^3}}
\newcommand{\eBox}[1]{{\square_#1^\varepsilon}}
\newcommand{\alphaeps}{{\alpha + \frac{\kappa}{2}}}
\def\gen{\mathrm{gen}}
\def\CS{\mathcal{S}}
\def\CU{\mathcal{U}}
\def\CK{\mathcal{K}}
\def\CQ{\mathcal{Q}}
\def\CR{\mathcal{R}}
\def\CB{\mathcal{B}}
\def\CC{\mathcal{C}}
\def\CD{\mathcal{D}}
\def\CT{\mathcal{T}}
\def\CF{\mathcal{F}}
\def\CA{\mathcal{A}}
\def\CG{\mathcal{G}}
\def\CM{\mathcal{M}}
\def\1{\mathbf{1}}
\def\ST{\mathscr{T}}
\def\SF{\mathscr{F}}
\def\s{\mathfrak{s}}
\def\FP{\mathfrak{p}}
\def\CP{\mathcal{P}}
\def\CZ{\mathcal{Z}}
\def\spanning{\mathop{\mathrm{span}}} 
\def\rho{\varrho}
\def\STpoly{\mathscr{T}_\text{poly}}
\def\CTpoly{\mathcal{T}_\text{poly}}
\def\CGpoly{\mathcal{G}_\text{poly}}
\def\CFpoly{\mathcal{F}_\text{poly}}
\def\NBar{\bar{\mathbb{N}}}
\let\eps\varepsilon
\def\fT{\mathfrak{T}}
\def\R{\mathbb{R}}
\def\scal#1{\langle#1\rangle}
\title{The $\Phi_3^4$ measure has sub-Gaussian tails}
\author{Martin Hairer and Rhys Steele}
\institute{Imperial College London \\ 
\noindent\email{m.hairer@imperial.ac.uk, r.steele18@imperial.ac.uk}}
\begin{document}
\maketitle

\begin{abstract}
We provide a very simple argument showing that the $\Phi^4_3$ measure does have
quartic exponential tails, as expected from its formal expression. This shows that
the corresponding moment problem is well-posed and provides a simple path to observing non-Gaussianity of the measure.
\end{abstract}


\section{Introduction}

It has been known since the groundbreaking work of Osterwalder and Schrader \cite{OS, OS2} that,
in some ``nice'' settings, the construction of a (bosonic) quantum field theory satisfying the Wightman axioms is equivalent to the construction of a probability measure on the 
space of distributions 
satisfying a number of natural properties. One of the pinnacles of that line of enquiry
was the construction in the seventies of the $\Phi_3^4$ measure
\cite{MR0231601,MR0359612,MR0408581,MR0416337,MR0384003},
which corresponds to the simplest case of an interacting theory in three space-time dimensions. 

Stated superficially, and in the variant derived from the results of the paper \cite{EE78}, the Osterwalder--Schrader axioms require that the Schwinger functions (or $n$-point correlation functions) corresponding to the measure satisfy a regularity assumption, invariance by certain Euclidean transformations, a symmetry axiom, reflection positivity and a clustering property. The last of these assumptions is not mentioned in \cite{EE78} but is present in \cite{OS, OS2}
where it is used independently of the other axioms to obtain the corresponding assumption in Wightman's axioms. (This is then used to derive uniqueness of the vacuum state of the associated quantum field theory.) Whilst the phase-cell expansion approach of Glimm and Jaffe \cite{MR0408581} and subsequent applications of cluster expansion methods by \cite{MR0416337, MS76} was sufficient to verify all of the Osterwalder--Schrader axioms, the level of exposition was reasonably complex and thus attempts to simplify various components were born. Whilst for brevity we cannot point to all such works since the traditional CQFT approach is not the main focus of this paper, we point the reader to \cite{MR723546} for a clean construction via the so-called skeleton inequalities (which nonetheless turns out to be insufficient to verify rotation invariance).

In recent years, a new construction of the $\Phi_3^4$ measure was given by a number of authors
\cite{GH21,MW18b}, mainly relying on stochastic quantisation \cite{ParisiWu} (though we refer the reader also to \cite{BG20b} for a construction via Girsanov's theorem and to \cite{BG20a} for an explicit variational formula). These ``dynamical''
constructions have the advantage of being able to leverage SPDE techniques to obtain very fine 
local properties for the resulting measure. 
It has furthermore been possible to verify all of the Osterwalder--Schrader axioms in the form of \cite{EE78}, except for rotation invariance  and the clustering property (both of which are only relevant when
considering the whole space). The goal of this work is to further leverage the techniques of stochastic quantisation and recent advancements in the field of singular SPDEs to obtain exponential integrability results for the $\Phi_3^4$ measure.

At a heuristic level, the $\Phi_3^4$ measure on the torus $\Torus$ is given up to normalisation by
$$\mu \sim \exp \left( \int -\frac12 |\nabla \Phi(x)|^2 - \frac14 \Phi(x)^4 dx \right)  \prod_{x \in \Torus} d\Phi(x)$$
for $\Phi \in \CS'(\Torus)$.
The above expression is purely a heuristic, both because the product measure appearing is not a well-defined object but also because once that measure is successfully interpreted one ends up in a situation where there is need for renormalisation. 
Nevertheless, this formal expression strongly suggests that, given any test function $\psi$, there
exists $\beta > 0$ such that the function $\Phi \mapsto \exp(\beta \scal{\Phi,\psi}^4)$
is integrable with respect to $\mu$.
The goal of this article is to prove precisely this result,
which is expected to be optimal based on the formal expression for
the measure.

Our proof strongly relies on slight modifications of the a priori bounds obtained in 
\cite{MW18b}, but is otherwise very elementary.
As proposed by Parisi and Wu \cite{ParisiWu}, we interpret $\mu$ as the
invariant measure of the $\Phi_3^4$ equation \cite{Hai14}, which was shown to exist in 
\cite{MW17c} and is unique by \cite{HM18,HS19}. Formally, this equation is given by
\begin{equs}[e:Phi]
	\partial_t \Phi = \Delta \Phi + \infty \Phi - \Phi^3 + \xi\;, \qquad \Phi(0, \cdot) = \Phi_0(\cdot)\;, \tag{$\Phi^4_3$}
\end{equs}
on the torus $\Torus \defeq (\mathbb{R} / \mathbb{Z})^3$ for $t > 0$ where $\Delta$ is the Laplacian on $\Torus$, $\xi$ is a space-time white noise, and $\Phi_0 \in C^\alpha$ for some $\alpha > -\frac 23$ is the initial condition. 
It is known that for sufficiently small coupling constant, the invariant measure 
for \eqref{e:Phi} does indeed coincide with the $\Phi^4_3$ measure as previously
constructed (see \cite{HM15}). One advantage of the recent constructions however is that they 
do not rely on any smallness condition for the coupling constant.

Of course, one must correctly interpret the term $\infty \Phi$ appearing in \eqref{e:Phi}, which corresponds to the need for renormalisation for this equation to be well posed. Indeed, even the solution to the linear part of the equation in spatial dimension $3$ is a Schwartz distribution rather than a function and hence the cubic term is not a priori well-defined. As was shown in \cite{Hai14} (see also \cite{CC13} for an approach via the paracontrolled calculus of \cite{GIP15}), the correct interpretation of a solution to \eqref{e:Phi} is as the limit in probability as $\delta \to 0$ of the solutions to the equations
$$\partial_t \Phi^{(\delta)} = \Delta \Phi^{(\delta)} + (3C_1^{(\delta)} - 9C_2^{(\delta)})\Phi^{(\delta)} - (\Phi^{(\delta)})^3 + \xi_\delta\;, \qquad \Phi(0, \cdot) = \Phi_0(\cdot)\;, \tag{$\Phi^4_3$}$$
where $C_i^{(\delta)}$ are sequences of diverging constants and $\xi_\delta$ is the mollification of $\xi$ at scale $\delta$. Whilst the choice of renormalisation constants depends on the choice of mollifier, the limiting object obtained in this way is independent of the choice of mollifier. We say that this limit is the solution to \eqref{e:Phi}. 

Here we have glossed over the small detail that there is in fact a one-parameter family of solutions obtained in this way, since perturbing the renormalisation constant by a fixed finite quantity does not affect the convergence result (this parameter is the coupling constant mentioned previously). 
Since our results apply equally to any element of this one-parameter family, we ignore this detail from here onwards, considering the choice of coupling to be fixed.

Since the initial development of a solution theory for \eqref{e:Phi}, there have been a number of results establishing various properties for the solution and the associated semigroup. For example, it was established in \cite{HM18} that 
the semigroup $\CP_t$ associated to \eqref{e:Phi} has the strong Feller property. Combining that work with \cite{MW18a}, one 
corollary of the results of \cite{HS19} is that this semigroup is also exponentially ergodic. 
A key ingredient for this proof is a powerful a priori bound that establishes a ``coming down from infinity'' property for \eqref{e:Phi}. This kind of bound was first established via paracontrolled techniques in \cite{MW17c} and later a much shorter argument that is in flavour based on the theory of regularity structures was given in \cite{MW18b,MW18a}. 

The main result of this paper is an exponential integrability result which is significantly stronger than that required by the Osterwalder--Schrader axioms and stronger than those previously available in the literature.

\begin{theorem}\label{t:main result}
		Fix $\kappa > 0$ sufficiently small and let $\mu_M$ be the invariant measure for \eqref{e:Phi} constructed on $C^{-\frac12 - \kappa}(\Torus_M)$ where $\Torus_M$ denotes the torus of length $M$. Let $\psi:\mathbb{R}^3 \to \mathbb{R}$ be a fixed smooth test function with compact support. For $M$ sufficiently large, interpret $\psi$ as a function on $\Torus_M$ in the natural way and define $V_M: C^{-\frac12-\kappa}(\Torus_M) \to \mathbb{R}$ by
		$$V_M(\Phi) = \frac{\beta}{4} \langle \Phi, \psi \rangle^4$$
		for $\beta > 0$ sufficiently small. Then there is a constant $C$ such that for all $M$ sufficiently large 
		$$\int \exp(V_M) \,d\mu_M \leq C.$$
\end{theorem}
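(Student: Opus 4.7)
My plan is to reduce the quartic exponential integrability of $\langle\Phi,\psi\rangle$ under $\mu_M$ to an exponential integrability of a suitable renormalised $L^4$-type functional of $\Phi$, and then establish the latter using the $\Phi^4_3$ dynamics together with the a priori bounds of \cite{MW18b}. The guiding heuristic is that $\mu_M$ is formally $\propto \exp(-\tfrac{1}{4}\int :\!\Phi^4\!:\,dx)\,d\mu_{\mathrm{GFF}}$, so tilting by $\exp(\beta\int :\!\Phi^4\!:)$ for $\beta < \tfrac14$ produces another finite-mass measure; combined with the H\"older bound $|\langle\Phi,\psi\rangle|^4 \leq \|\psi\|_{L^{4/3}}^4\int|\Phi|^4\,dx$, the theorem follows as soon as $\beta \|\psi\|_{L^{4/3}}^4$ lies below the resulting threshold.

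Concretely, I would use invariance to write $\int\exp(V_M)\,d\mu_M = \EE[\exp(V_M(\Phi_T))]$ with $\Phi_0 \sim \mu_M$, decompose $\Phi_T = Z_T + Y_T$ via Da Prato--Debussche (so that $\langle Z_T,\psi\rangle$ is centered Gaussian with $M$-uniform variance by compact support of $\psi$), and bound the remainder $Y_T$ in $L^4(\mathrm{supp}\,\psi)$ via a modification of the \cite{MW18b} energy estimate. The natural energy estimate, testing the equation for $Y$ against $Y^3$ and using Young's inequality, should yield a bound of the form $\partial_t\int Y^4 + c\int Y^6 \les \mathcal{Q}(Z_T,:\!Z_T^2\!:,:\!Z_T^3\!:)$ for a polynomial $\mathcal Q$ whose arguments belong to low Wiener chaos; the key observation is that these stochastic terms enter with \emph{small enough powers} that the resulting right-hand side has exponential moments (since $\|:\!Z^k\!:\|$ has tails like $\exp(-cx^{2/k})$ and only low powers survive after absorption by the cubic dissipation). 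Integrating in time against the $\int Y^6$ term and combining with H\"older and the Gaussian control on $\langle Z_T,\psi\rangle$ would then produce the required quartic exponential moment of $\langle\Phi_T,\psi\rangle$ uniformly in $M$.

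The principal obstacle is the upgrade from polynomial to exponential moments in the a priori estimate. A naive splitting $\langle\Phi,\psi\rangle^4 \leq C(\langle Z,\psi\rangle^4 + \langle Y,\psi\rangle^4)$ cannot succeed, because the Gaussian field $Z$ does not have quartic exponential moments. The cubic dissipation $-\Phi^3$ must therefore be used in an essential way inside the MW18b-style energy estimate, not only to control the remainder $Y$ but to produce enough ``margin'' against the stochastic data that it dominates the Gaussian contribution to $\langle\Phi,\psi\rangle$ once the two are recombined. Arranging this balance uniformly in $M$ while keeping track of the higher-chaos objects $:\!Z^2\!:$ and $:\!Z^3\!:$ appearing in the shifted equation is the main technical challenge and constitutes the ``slight modifications'' of \cite{MW18b} alluded to in the introduction.
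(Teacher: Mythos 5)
Your proposal correctly identifies the central difficulty — the Gaussian field $Z$ does not have quartic exponential moments, so any splitting $\Phi = Z + Y$ and naive recombination must fail — but it does not resolve it. The suggestion that the cubic dissipation can be made to ``dominate the Gaussian contribution to $\langle\Phi,\psi\rangle$'' is not supported by an argument, and indeed it cannot work in the form you describe: in any a priori bound for $\Phi^4_3$ the stochastic objects $\<1>$, $\<2>$, $\<3>$ enter as data that the dissipation cannot cancel (it can only control the remainder $Y$), and $\langle \<1>(T), \psi\rangle$ is always Gaussian. Bounding $\EE[\exp(\beta\langle\Phi_T,\psi\rangle^4)]$ by estimating $\Phi_T$ through a pathwise bound and then taking exponential moments of the right-hand side is therefore a dead end, no matter how cleverly the Young inequalities are arranged. (The auxiliary H\"older bound $|\langle\Phi,\psi\rangle|^4 \leq \|\psi\|_{L^{4/3}}^4\int|\Phi|^4$ in your heuristic is also vacuous since $\int |\Phi|^4$ is not a well-defined nonnegative quantity for a distribution of regularity $-\tfrac12-\kappa$; only the Wick-renormalised $\langle :\!\Phi^4\!:, 1\rangle$ makes sense and it is not sign-definite.)

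The paper takes a genuinely different route that sidesteps this obstruction entirely. Rather than trying to extract quartic exponential tails from an a priori bound for the $\Phi^4_3$ dynamics, one writes down a \emph{modified} SPDE
\begin{equation*}
\partial_t\Psi = \Delta\Psi + \infty\Psi - \Psi^3 + \beta F_n'(\langle\Psi,\psi\rangle)\psi + \xi
\end{equation*}
whose invariant measure $\nu^n$ can be identified (via spatial discretisation, where everything reduces to finite-dimensional SDEs) as $d\nu^n = \CZ_n^{-1}\exp(\beta F_n(\langle\cdot,\psi\rangle))\,d\mu$, with $F_n$ a truncation of $\tfrac14 x^4$. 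The only analytic input needed is a Moinat--Weber-type ``coming down from infinity'' bound for this modified equation, uniform in $n$ and in the initial condition, and this is a \emph{polynomial}, not exponential, bound — it is merely used to show that $\nu^n$ puts mass at least $\tfrac12$ on some fixed ball. Since $F_n$ is bounded on that ball, this forces $\CZ_n \leq 2C$ uniformly in $n$, and Fatou then yields $\int\exp(V)\,d\mu < \infty$. The exponential weight thus never needs to be integrated against Gaussian data at all; it appears only through the density relation between two invariant measures. Your proposal, by contrast, would require exponential moments of the stochastic trees, which do not exist in the strength required — this is precisely the gap you flag but do not close.
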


Both Gubinelli--Hofmanová \cite{GH21} and Moinat--Weber \cite{MW18b} 
had previously obtained stretched exponential integrability
for any exponent strictly less than~$1$ using SPDE techniques. Whilst this is sufficient to verify 
the regularity axiom in the form of \cite{EE78}, it is insufficient for the form stated in \cite{GJ12} for the purpose of simplifying the exposition there. Theorem~\ref{t:main result} is sufficiently strong to prove that this stronger assumption is satisfied and appears to be the first result in the
literature that yields better than Gaussian tails for the $\Phi^4_3$ measure.
The best bounds obtained using phase cell expansion techniques appear to be slightly worse than Gaussian \cite[Thm~I.3]{MS76}.
See also \cite[Lem.~1.3]{MR0416337} for a proof of exponential integrability. In two dimensions however, 
bounds of the type given in Theorem~\ref{t:main result} were previously obtained by Fröhlich in \cite[Thm~4.8(5)]{MR436831}.

Additionally, beyond being of interest in its own regard for providing what we expect to be optimal integrability for spatial averages of the $\Phi_3^4$-measure, Theorem~\ref{t:main result} is also of interest since it provides sufficient conditions for the moment problem for the $\Phi_3^4$-measure to be well-posed and also provides a new and simple way to observe non-Gaussianity of this measure since a Gaussian measure would not satisfy such an integrability condition (though the latter result was already obtained in the SPDE literature via more involved means; see e.g. \cite[Theorem 5.4]{GH21}).

The key ingredient of our approach is interpreting the integrability statement as corresponding to finiteness of the measure $\exp(V) d\mu$ (suppressing here the dependence on $M$). In the same way that one expects the $\Phi_3^4$ measure to be invariant for \eqref{e:Phi}, one expects $\exp(V) d\mu$ to be invariant for a certain singular SPDE, which we later label \eqref{e:Psi}. Hence, as is usual in the program of stochastic quantisation, we proceed to study this measure via the equation \eqref{e:Psi}.
Without any loss of generality, in what follows we will consider only the case where the test function $\psi$
is such that $\|\psi\|_\infty, \|D\psi\|_\infty \leq 1$.

Finally, we observe that the ideas behind the proof of our main result are not restricted to the particular form of $V$ contained there. In principle, our techniques should be adaptable to obtain exponential integrability of observables that don't exhibit higher than 4th order behaviour and that do not introduce a requirement for additional renormalisation. As a particular instance of this, an appropriate application of the ideas outlined in Section~\ref{s:Main Result} would also yield the following exponential integrability result for Sobolev type norms. We do not include a proof of this theorem since it requires only trivial adaptations of the proof of Theorem~\ref{t:main result}.

\begin{theorem}\label{t: Sobolev bound}
	Given $\alpha > 0$ and $\Phi \in C^{- \alpha}(\Torus)$, we define its homogenous Sobolev norm by $|\Phi|_{-\alpha}^2 \defeq \langle \Phi, \Phi \ast K_{\alpha} \rangle$, where $K_{\alpha}$ is the convolution kernel associated to $\Delta^{- \alpha}$. For fixed $\kappa > 0$, we then define $W: C^{-\frac12-\kappa} \to \mathbb{R}$ by
	$$W(\Phi) = \frac{\beta}{4} |\Phi|_{-\frac12 - \kappa}^4\;.$$
	Then $\exp(W) \in L^1(\mu)$ for $\beta > 0$ sufficiently small.
\end{theorem}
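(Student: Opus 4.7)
The plan is to adapt the strategy behind Theorem~\ref{t:main result} with $V$ replaced by $W$. Set $\alpha = \tfrac12 + \kappa$. Although $W$ is no longer a fourth power of a linear form, it is still a smooth functional on the support of $\mu$: the kernel $K_\alpha$ smoothens by $2\alpha$ derivatives, so $\Phi \ast K_\alpha \in C^{1/2+\kappa}$ whenever $\Phi \in C^{-1/2-\kappa}$, and the pairing $|\Phi|_{-\alpha}^2 = \sum_{k\neq 0}|k|^{-2\alpha}|\hat\Phi_k|^2$ is finite $\mu$-almost surely (since $\Phi \in H^{-1/2-\kappa}$ $\mu$-a.s.). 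Its Fr\'echet derivative is $\nabla W(\Phi) = \beta\, |\Phi|_{-\alpha}^2\, (\Phi \ast K_\alpha)$, a scalar multiple of a regular function. As in the proof of Theorem~\ref{t:main result}, the object to study is the perturbed SPDE
\begin{equation*}
    \partial_t \Psi = \Delta \Psi + \infty\, \Psi - \Psi^3 + \tfrac{\beta}{2}\, |\Psi|_{-\alpha}^2\, (\Psi \ast K_\alpha) + \xi\,,
\end{equation*}
renormalised exactly as $\Phi^4_3$. Since the extra drift is smooth in $\Psi$, no further renormalisation is required and the equation is locally well posed by treating the extra term as a locally Lipschitz perturbation of $\Phi^4_3$. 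Formally, $e^W d\mu / Z$ (with $Z = \int e^W d\mu$) is its invariant measure.

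The core technical step is to propagate the coming-down-from-infinity bound of \cite{MW18b,MW18a} to $\Psi$, uniformly in the mollification parameter. In the basic $L^2$ energy identity, the extra drift contributes $\tfrac{\beta}{2}\,|\Psi|_{-\alpha}^4$, which since $|\Torus|=1$ is bounded by $\tfrac{\beta}{2}\|\Psi\|_{L^2}^4 \leq \tfrac{\beta}{2}\|\Psi\|_{L^4}^4$ by H\"older. For $\beta$ sufficiently small this is absorbed into the dissipation $\|\Psi\|_{L^4}^4$ coming from $-\Psi^3$. Propagating this gain through the scheme of \cite{MW18b} produces an invariant probability measure $\nu$ for the perturbed equation together with a uniform moment bound $\int |\Phi|_{-\alpha}^4 d\nu \leq M$ (one reads this off by decomposing $\Psi = X + v$: the moments of $|X|_{-\alpha}$ are controlled directly since $X$ is Gaussian, while $|v|_{-\alpha} \leq \|v\|_{L^2}$ is controlled by the a priori $L^\infty$-bound of \cite{MW18b}).

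To conclude, I would work in a mollified or Galerkin approximation in which $\nu^{(\delta)} = e^W d\mu^{(\delta)}/Z^{(\delta)}$ holds directly by a Fokker--Planck computation, so that $Z^{(\delta)} = 1/\int e^{-W} d\nu^{(\delta)}$. Markov's inequality gives $\nu^{(\delta)}(\{W > K\}) \leq \tfrac{\beta M}{4K}$, hence for $K$ large enough depending on $\beta M$,
\begin{equation*}
    \int e^{-W} d\nu^{(\delta)} \geq e^{-K}\bigl(1 - \tfrac{\beta M}{4K}\bigr) \geq c > 0\,,
\end{equation*}
which yields $Z^{(\delta)} \leq 1/c$ uniformly in $\delta$. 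Passing to the limit $\delta \to 0$ by weak lower semi-continuity then gives $\int e^W d\mu \leq 1/c < \infty$. The main obstacle is the a priori bound: the extra drift is nonlocal in $\Psi$ through the scalar $|\Psi|_{-\alpha}^2$, which could a priori disrupt the space-time localisation scheme of \cite{MW18b}. However, this scalar factors out of the local energy identities, and on the unit torus it is controlled by $\|\Psi\|_{L^4}^2$, so the nonlocality is harmless and the adaptation is routine, matching the authors' description.
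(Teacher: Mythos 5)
Your proposal captures the essential ideas and is, as far as I can tell, sound; the paper's own ``proof'' of this theorem is just the remark that one performs the same steps as for Theorem~\ref{t:main result}, so any valid argument is by definition a variant. You do, however, take a noticeably different route at the concluding step, and your description of the a priori estimate uses a framing that does not quite match the one actually used in the paper and in \cite{MW18a}. Both are worth flagging.

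On the conclusion: the paper truncates the observable via $F_n$ (so that the density $e^{\beta F_n}$ is manifestly bounded and $\CZ_n < \infty$ a priori), shows $\nu^n$ assigns mass at least $\tfrac12$ to a fixed ball $\CB_\alpha(K)$ on which the density is bounded by $e^{\beta K^4}$, deduces $\CZ_n \leq 2e^{\beta K^4}$, and invokes Fatou. You instead avoid the truncation by observing that on the unit torus $W(\Phi) \leq \tfrac{\beta}{4}\|\Phi\|_{L^4}^4$, so that $Z^{(\delta)}<\infty$ at every approximation level by domination, and then you control $Z^{(\delta)} = 1/\int e^{-W}\,d\nu^{(\delta)}$ by Markov applied to the fourth moment of $|\Phi|_{-\alpha}$ under $\nu^{(\delta)}$. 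These two endings are essentially dual to one another: the paper needs only $\nu^n(\CB_\alpha(K)) \geq \tfrac12$ (a positive-mass statement), whereas you need a quantitative moment bound for $\nu^{(\delta)}$; both follow from the same coming-down-from-infinity estimate, so the distinction is mostly cosmetic. Your version is arguably cleaner since it does not require introducing the truncation $F_n$ at all, though one should then check separately that the Girsanov argument used in Theorem~\ref{t:Weak Convergence} to establish full support of $\nu$ still works with an unbounded drift.

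On the a priori bound: you phrase the absorption of the extra drift as ``the extra drift contributes $\tfrac{\beta}{2}|\Psi|_{-\alpha}^4$ to the basic $L^2$ energy identity''. This is a correct heuristic, but it is not the mechanism used in \cite{MW18a} or in Section~\ref{s: Cont Bounds}: the argument there is a pointwise (maximum-principle) estimate (Lemma~\ref{l: maximum estimate}), not an $L^2$ energy balance, and the drift is absorbed via a sup-norm bound $\beta\,\big\||\Psi|_{-\alpha}^2(\Psi\ast K_\alpha)\big\|_{\infty} \lesssim \beta\,(\|v\|_D + [\<1>]_\kappa)^3$. Also, your statement that the nonlocal scalar ``factors out of the local energy identities'' understates the issue: the scalar $|\Psi|_{-\alpha}^2$ depends on the solution over the entire torus, so one cannot localise the MW argument in space. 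The paper deals with this explicitly by redefining the domain $P_R = (R^2,1)\times\Torus$ to drop spatial localisation altogether (see the discussion before Lemma~\ref{l: maximum estimate}). This is a genuine modification that your proposal glosses over. Finally, a very small point: the factor $\Psi\ast K_\alpha$ is only $C^{1/2+\kappa}$ rather than smooth like the $\psi\in\CB_0^r$ of Theorem~\ref{t:main result}, so the fixed-point formulation in the regularity structure needs a sentence checking that this (solution-dependent, H\"older) coefficient still maps $\CD_T^{\gamma,\eta}$ into the appropriate target space; this works because the integration map $\CP$ gains two orders of regularity, but it is not quite identical to the argument in Theorem~\ref{t: Fixed Point}.
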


Similarly to above, it is also possible to obtain a statement independent of the torus size (which
is here fixed at $1$), but this then requires restricting the integral of $\scal{\cdot,\cdot}$ to a bounded region.

\begin{remark}
We restrict our statement and exposition to the case of dimension $d=3$, but the case $d=2$ 
is simpler and the exact same argument works. Actually, one should even be able to treat 
the case $d=4-\kappa$ in the sense of \cite{Phi4eps} but that would require some modifications
to our argument.
\end{remark}

\begin{remark}
Let us briefly remark on the link to the results obtained in \cite{GarbanNewman} which suggest even stronger
tail behaviour. Here, we consider the case $d=2$ and recall that ``the'' $\Phi^4_2$ measure is really a two-parameter family
$\{\mu_{c,M}\}_{c \in \R, M \ge 1}$, where $c$ denotes the ``finite part'' of the renormalisation constant and $M$
denotes the finite volume cut-off. Write also $\psi_L(x) = \psi(x/L)$ for $L \ge 1$. 
Our result then shows that for any {\em fixed} $L$, one has 
$\mu_{c,M}(\scal{\Phi, \psi_L} > K) \lesssim \exp(-C_L K^4)$, for some constant $C_L>0$, uniformly over $M$ and
locally uniformly over $c$.

On the other hand, \cite{GarbanNewman} suggests that, for $c$ equal to its critical value $\bar c$, one has
$\mu_{\bar c,M}(\scal{\Phi, \psi_L} > K) \lesssim \exp(-C L^{-2} K^{16})$, provided that $M \gg L$ and 
$L$ is sufficiently large as a function of $K$. (More precisely, one first fixes $x = K L^{-1/8}$ and sends $L \to \infty$,
then considers $x$ large.) For $c \neq \bar c$ however one expects that,
provided that $\psi$ integrates to $0$, one has
$\mu_{c,M}(\scal{\Phi, \psi_L} > K) \lesssim \exp(-C L^{-2} K^{2})$, again for $M \gg L$ and $L$ sufficiently
large. There is of course no contradiction between these bounds since they apply to 
non-overlapping regimes.
\end{remark}

\subsubsection*{Notation and Conventions}

Throughout this article we fix the usual parabolic scaling of $\mathbb{R}^{4} = \mathbb{R}^{1+3}$ so that for a space-time point $z = (t,x)$, $\|z\| = |t|^{\frac12} \vee |x|$ where $|\cdot|$ is the $\ell^\infty$ norm. Additionally, we consider $\mathbb{R}^3$ as being equipped with its usual Euclidean scaling and as corresponding to the `spatial variables' in $\mathbb{R}^4$. 

The scale of regularity of functions in which we will be interested is that of (parabolic) H\"older spaces. For $r > 0$, we let $C^r = C^r(\mathbb{R}^d)$ be the usual space of $r$-H\"older continuous functions. We remark that in the case $r \in \mathbb{N}$, this space consists of $(r - 1)$-times continuously differentiable functions whose $(r - 1)$-th derivative is Lipschitz continuous rather than the smaller space of $r$-times continuously differentiable functions. Further, for $r > 0$ we denote by $\CB_0^r$ the set of $C^r$ functions with support in the parabolic ball of radius $1$ centered at $0$ with $C^r$ norm at most $1$. Throughout the article, one should think of $r$ as a sufficiently large fixed integer. 

For $\alpha < 0$, we let $C^\alpha = C^\alpha(\mathbb{R}^4)$ be the space of Schwartz distributions $\zeta \in \CS'(\mathbb{R}^4)$ that lie in the dual of the space of compactly supported $C^r$ functions for $r > - \lfloor \alpha \rfloor$ such that 
$$\|\zeta\|_\alpha \defeq \sup_{\varphi \in \CB_0^r} \sup_{z \in \mathbb{R}^4} \sup_{\lambda \in (0,1]} \lambda^{-\alpha} |\langle \zeta, \varphi_z^\lambda \rangle | < \infty$$
where $\varphi_z^\lambda(s,y) = \lambda^{-5} \varphi(\lambda^{-2}(s - t), \lambda^{-1}(y-x))$ for $z = (t,x)$. We adopt a similar definition for H\"older spaces of negative regularity over $\mathbb{R}^3$ in which we replace the parabolic scaling with the Euclidean one in the obvious manner. 

We will fix the values $\alpha = - \frac12 - \kappa, \alpha' = \alphaeps$ for $\kappa$ as in Theorem \ref{t:main result}. The important feature of this choice is that all results regarding existence of solutions or convergence of approximations will hold on both $C^\alpha$ and $C^{\alpha'}$, allowing us to at times exploit the compactness of the embedding $C^{\alpha'} \xhookrightarrow{} C^\alpha$.

Finally, for convenience later, we introduce the notation $\NBar \defeq \mathbb{N} \cup \{\infty\}$.

\subsection{Article Structure}
In Section~\ref{s:Main Result}, we gather the statements of results from later sections that are necessary to complete the proof of Theorem~\ref{t:main result} weakened to allow the constant $C$ appearing in the conclusion to depend on the size of the torus. In this same section we then complete said proof. The purpose of first proving this weaker result is to make clearer the key ideas behind our proof. In Section~\ref{s: Extensions}, we give the adaptations necessary to Section~\ref{s:Main Result} to obtain Theorem~\ref{t:main result}. The subsequent sections then contain the technical details of adapting the required results in the literature to our desired setting. In particular, in Section~\ref{s:reg} we introduce elements of the theory of regularity structures \cite{Hai14} and their inhomogeneous models, as introduced in \cite{HM15}. In particular, we show that the equations \eqref{e:TPsi} introduced in Section~\ref{s:Main Result} have a solution theory in this framework that yields global in time solutions. In Section~\ref{s: Cont Bounds}, we will further show that said solutions satisfy a certain a priori bound uniformly in $n$. Finally, in Section~\ref{s:Dreg} we recall details of the discretisation of regularity structures as found in \cite{HM15} (see also \cite{ErHa19}). The main result of this section is the convergence of a family of spatially discrete approximations to the solution of \eqref{e:TPsi}.

\subsection*{Acknowledgements}

{\small
MH gratefully acknowledges support from the Royal Society through a research professorship.
We are grateful to the referees and to Abdelmalek Abdesselam for a number of pointers to the literature 
that were missing in an earlier draft.
}

\section{Proof of Theorem~\ref{t:main result} for Fixed Volume}\label{s:Main Result}

In this section, we will suppress the dependency on the torus length $M$ in our notations. The results obtained will apply for any fixed torus length but in this section we will not obtain a bound that is uniform in $M$.

 As mentioned in the introduction, the main insight of our approach to the proof of Theorem~\ref{t:main result} is to consider the measure $\exp(V)d \mu$ for $V(\Phi) = \frac{\beta}{4} \langle \Phi, \psi \rangle^4$. By analogy to the classical setting of a one-dimensional stochastic gradient flow, where identification of invariant measures is reduced to a simple calculation, if we were to ignore the effect of singularities involved then we would expect $\exp(V)d \mu$ to be the invariant measure for the equation
\begin{equs}[e:Psi]
	\partial_t \Psi = \Delta \Psi + \infty \Psi - \Psi^3 + \beta \langle \Psi, \psi \rangle^3 \psi +  \xi\;, \qquad \Psi(0, \cdot) = \Psi_0(\cdot)\;,
	\tag{$\Psi^4_3$}
\end{equs}
where $\langle \cdot, \psi \rangle$ refers to testing in space only.

It is not immediate from the constructions of \cite{Hai14} that this equation has a solution theory provided by the framework of regularity structures since the additional nonlinearity appearing is a non-local one. In Section~\ref{s:reg}, we show that this additional nonlinearity poses no serious trouble in building such a solution theory for \eqref{e:Psi} using the techniques of regularity structures. Our preferred approach in this section is that of inhomogeneous models as first presented in \cite{HM15}. This approach is advantageous both because in the additional nonlinearity time plays a distinguished function-like role and because later we will want to discretise in space (but not time) again giving the time variable a distinguished role.

Combining Theorem~\ref{t: Fixed Point} and Remarks~\ref{r: classical solutions},~\ref{r:NoBlowUp} yields global in time solutions to equation \eqref{e:Psi}. Additionally, these solutions are given as limits in probability as $\delta \to 0$ of the pathwise constructed solution to the random PDE
\begin{align}\label{e:Psi renorm}
(\partial_t - \Delta)u = -u^3 +(3C_1^\delta - 9C_2^\delta)u + \beta \psi \langle u, \psi \rangle^3  + \xi_\delta. 
\end{align}
where $C_i^\delta$ are renormalisation constants that diverge as $\delta \to 0$ and $\xi$ is the mollification of space-time white noise at scale $\delta$. 

This allows us to leverage the techniques used in \cite{MW18a} to prove a priori bounds on the solution to \eqref{e:Psi} that are uniform in the initial condition. If the identification of $\exp(V) d\mu$ as an invariant measure for \eqref{e:Psi} were more than a heuristic, we could conclude the proof of Theorem~\ref{t:main result} by considering the solution to the equation started from the invariant distribution. Unfortunately, this is not the case and as a result we will need a priori bounds in a more general setting than stated above; hence we defer the statement of such bounds until we are in this setting.

To overcome this issue, we proceed in two stages of approximation. First we truncate the additional nonlinearity appearing in equation \eqref{e:Psi}. For $n \in \mathbb{N}$, let $F_n:\mathbb{R} \to \mathbb{R}$ be a smooth function such that
$$F_n(x) = \begin{cases}
\frac14x^4, \qquad &|x| \leq n 
\\
\frac14n^4 + 1, \qquad &|x| \geq n + 1
\end{cases}
$$ $|F_n(x)| \in [n^4, n^4 + 1]$ for $|x| \in [n,n+1]$ and $|F_n'(x)| \leq n^3$ for all $x \in \mathbb{R}$. We then consider the equations (indexed by $n \in \mathbb{N}$)
\begin{equs}[e:TPsi]
	\partial_t \Psi^{(n)} = \Delta \Psi^{(n)} + \infty \Psi^{(n)} - (\Psi^{(n)})^3 + \beta F_n'(\langle \Psi^{(n)}, \psi \rangle) \psi +  \xi\;
	\quad \tag{$\Psi^{4,n}_3$}
\end{equs}
with the same initial condition $\Psi^{(n)}(0, \cdot) = \Psi_0(\cdot)$.

We additionally let $F_\infty(x) = \frac14 x^4$ so that equation \eqref{e:Psi} corresponds to \eqref{e:TPsi} in the case $n = \infty$.

Equations \eqref{e:TPsi} are again formulated in the framework of regularity structures in Section~\ref{s:reg}. In Section~\ref{s: Cont Bounds}, we adapt the techniques of \cite{MW18a} to prove the following a priori bound which is now uniform in both the initial condition, choice of $\psi$ and in $n$. 

\begin{theorem}\label{t:A Priori Bound}
	Fix a function $\psi \in \CB_0^r$ and fix also $\beta > 0$ to be sufficiently small. If $\Psi^{(n)}$ is the solution of \eqref{e:TPsi} for this $\psi$, then for all $R \in (0,1)$ one has that
	$$\sup_{ t \in (R^2,1)} \|\Psi^{(n)}(t)\|_{C^\alpha} \leq 1 \vee C \max\{ R^{-1}, \|\<1>\|_{\infty, -\frac12 -; (0,1)}, [\tau]_{|\tau|}^\frac{1}{n_\tau (\frac12 -)}; \tau \in \fT\}$$
	where $\fT$ is a collection of trees constructed from the driving noise. Both this collection of trees and the various norms appearing on the right hand side are defined in Section~\ref{s: Cont Bounds}
\end{theorem}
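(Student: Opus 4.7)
The plan is to adapt the coming-down-from-infinity strategy of Moinat and Weber \cite{MW18a}, which exploits the dissipative cubic nonlinearity to produce a bound independent of the initial data. The only substantive modification needed is to accommodate the additional non-local forcing $\beta F_n'(\langle \Psi^{(n)}, \psi \rangle)\psi$.

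First I would perform the standard regularity-structures decomposition $\Psi^{(n)} = \Pi + v$, where $\Pi$ is an explicit linear combination of the stochastic objects indexed by the trees in $\fT$ and $v$ is a remainder of strictly positive regularity. Expanding the cubic in terms of $\Pi$ and $v$ gives
\begin{equation*}
(\partial_t - \Delta) v + v^3 = R(v,\Pi) + \beta F_n'(\langle \Psi^{(n)}, \psi\rangle)\psi,
\end{equation*}
where $R$ is polynomial in $v$ of degree at most $2$, with coefficients given by sums of products of stochastic trees whose pointwise sizes are controlled by the $[\tau]_{|\tau|}$ and by the norm of the linear solution.

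Next I would carry out the Moinat--Weber iteration. The key ingredient is the supersolution property: if $(\partial_t - \Delta) v + v^3 \le f$ on a parabolic ball of radius $r$, then on a slightly shrunk ball $\|v\|_\infty \lesssim r^{-1} \vee \|f\|_\infty^{1/3}$. Combined with Schauder estimates for the lower-order contributions of $R$ and a family of localised $L^\infty$-type seminorms indexed by a scale parameter, this yields a self-improving inequality that eventually bounds $v$ in terms of $R^{-1}$ and the quantities $[\tau]_{|\tau|}^{1/(n_\tau(\frac12-))}$; the precise exponent is forced by matching scaling dimensions against the cubic damping.

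The only genuinely new analytic input is the estimate on the non-local forcing. Using $|F_n'(y)| \le |y|^3$ uniformly in $n$, $\|\psi\|_\infty \le 1$, and the compact support of $\psi$, one has the pointwise bound
\begin{equation*}
|\beta F_n'(\langle \Psi^{(n)}, \psi \rangle)\psi(x)| \le \beta \|\Psi^{(n)}\|_{L^\infty(\mathrm{supp}\,\psi)}^3 \lesssim \beta \bigl(\|v\|_\infty^3 + \|\Pi\|_\infty^3\bigr).
\end{equation*}
For $\beta$ small the $\|v\|_\infty^3$ piece is absorbed into the dissipation $v^3$, while the $\|\Pi\|_\infty^3$ piece is already controlled by the tree norms on the right-hand side of the claim. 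The main obstacle is the feedback between the local nature of the MW iteration, formulated on small parabolic balls, and the global character of $\langle \Psi^{(n)},\psi\rangle$. I would resolve this by running the iteration uniformly over a domain containing $\mathrm{supp}\,\psi$, producing a schematic estimate of the form $\|v\|_\infty \le (\text{tree norms}) + C\beta^{1/3}\|v\|_\infty$ which can be solved for $\|v\|_\infty$ when $\beta$ is sufficiently small. Uniformity in $n$ comes for free from the $n$-independent bound $|F_n'(y)| \le |y|^3$.
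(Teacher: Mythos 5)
Your proposal follows essentially the same strategy as the paper's proof of Theorem~\ref{t:continuum bound}: adapt the Moinat--Weber coming-down-from-infinity argument, work on a spatial domain containing $\operatorname{supp}\psi$ (dropping MW's spatial localisation) since the new forcing is non-local, bound it by $\beta(\|v\|^3 + \text{tree norms})$ using $|F_n'(y)| \leq |y|^3$ uniformly in $n$, and absorb the resulting $\beta^{1/3}\|v\|$ contribution by taking $\beta$ small. One minor phrasing caveat: the decomposition actually used is the bare $v = u - \<1>$ as in \cite{MW18a}, with the higher-order trees entering through a local Taylor-type expansion around a base point rather than the global Ansatz $\Psi^{(n)} = \Pi + v$ you describe, but this does not affect the correctness of your plan.
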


In a next stage of approximation, in Section~\ref{s:Dreg}, for each $n$ we discretise space to obtain a system of SDEs, labelled \eqref{e:DPsiRenorm}, approximating \eqref{e:TPsi}. Simultaneously we consider the equivalent discrete approximations of \eqref{e:Phi} as considered in e.g. \cite{HM15, GH21}. The main result of this section is the convergence of the discrete approximations to the solution of the corresponding continuum equation as the grid scale is sent to $0$.

The purpose of these two stages of approximation is as follows. Denoting the invariant measure of the discrete approximations to \eqref{e:TPsi} at grid-scale $\varepsilon$ by $\nu_\varepsilon^n$ and that of the discrete approximation to \eqref{e:Phi} by $\mu_\varepsilon$ we have that 
$$d\nu_\varepsilon^n = \CZ_{n, \varepsilon}^{-1}\exp\big(\beta F_n(\langle \iota^\varepsilon \cdot, \psi \rangle)\big)\, d \mu_\varepsilon$$
where $\iota^\varepsilon: \mathbb{R}^\DTorus \to C^\alpha$ interprets a function on the discretised torus $\DTorus \defeq \varepsilon \mathbb{Z}^3 \cap \mathbb{T}$ as a distribution via piecewise constant extension by setting
$$\langle \iota^\varepsilon F, \varphi \rangle \defeq \sum_{y \in \DTorus} \int_\eBox{y} F(y) \varphi(z) dz$$
where $\eBox{y} \defeq \{z \in \Torus: \|z-y\|_\infty \leq \frac{\varepsilon}{2}\}$. In particular, we can exploit the boundedness of this density to identify that $d \nu^n = \CZ_n^{-1}\exp(\beta F_n(\langle \cdot, \psi \rangle)) d\mu$. This knowledge, combined with the a priori bounds of Section~\ref{s: Cont Bounds} allows us to conclude the proof of Theorem~\ref{t:main result} for fixed volume. 

The rest of this section will complete the details missing from the remarks in the above paragraph.

\begin{theorem}\label{t:Weak Convergence}
	The measures $\iota_*^\varepsilon \mu_\varepsilon$ converge weakly on $C^\alpha$ to $\mu$ as $\varepsilon \to 0$ along the dyadics. The same convergence holds for $\iota_*^\varepsilon \nu_\varepsilon^n \to \nu^n$.
\end{theorem}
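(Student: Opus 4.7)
I would prove the two convergence statements in the following order: first establish tightness on $C^\alpha$ along dyadic $\varepsilon$; then identify any weak subsequential limit as an invariant measure for \eqref{e:Phi} using the convergence of the spatial discretisation; then invoke uniqueness of the invariant measure. The statement for $\nu^n_\varepsilon$ would then follow from the first one essentially for free.

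For tightness, the key observation is that the compact embedding $C^{\alpha'} \hookrightarrow C^\alpha$ (with $\alpha' = \alpha + \kappa/2$) reduces the question to a uniform moment bound on $\|\cdot\|_{C^{\alpha'}}$ under $\iota_*^\varepsilon\mu_\varepsilon$. To produce such a bound, I would start the discrete $\Phi^4_3$ dynamics from its invariant measure $\mu_\varepsilon$ and apply (the discrete version of) the a priori ``coming down from infinity'' estimate underlying Theorem~\ref{t:A Priori Bound}, which in this section's framework will be proved uniformly in $\varepsilon$ and (thanks to the choice of $\alpha,\alpha'$) in either of the two regularities. Since the stochastic objects $\<1>$ and the other trees in $\fT$ have, for each fixed $R$, all moments uniformly bounded in $\varepsilon$, the right-hand side of the a priori bound is a random variable with moments bounded uniformly in $\varepsilon$. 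Evaluating at any $t \in (R^2,1)$ and using stationarity then yields
\[
\EE_{\iota_*^\varepsilon \mu_\varepsilon}\big[\|\Phi\|_{C^{\alpha'}}^p\big] \leq C_p < \infty\;,
\]
uniformly in $\varepsilon$, which gives tightness on $C^\alpha$ by Prokhorov.

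For the identification of subsequential limits, let $\tilde\mu$ be any weak limit of $\iota_*^{\varepsilon_k}\mu_{\varepsilon_k}$ along a dyadic subsequence. Run the discrete dynamics $X^\varepsilon$ from $X^\varepsilon(0) \sim \mu_\varepsilon$, so that $X^\varepsilon$ is stationary and $\iota^\varepsilon X^\varepsilon(t) \sim \iota_*^\varepsilon \mu_\varepsilon$ for every $t$. By the convergence-of-discretisations result announced for Section~\ref{s:Dreg}, for any bounded continuous $G$ on $C^\alpha$ and any $t>0$,
\[
\int G\, d(\iota_*^{\varepsilon_k}\mu_{\varepsilon_k}) = \EE\big[G\big(\iota^{\varepsilon_k} X^{\varepsilon_k}(t)\big)\big] \longrightarrow \EE\big[G(\Phi(t))\big] = (\CP_t^* \tilde\mu)(G)\;,
\]
where $\Phi$ is the \eqref{e:Phi} solution with $\Phi(0) \sim \tilde\mu$ (coupled via the initial condition through Skorokhod representation if needed). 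The left-hand side converges to $\tilde\mu(G)$, so $\CP_t^*\tilde\mu = \tilde\mu$ for every $t>0$, i.e.\ $\tilde\mu$ is invariant for \eqref{e:Phi}. Uniqueness of this invariant measure (a consequence of the strong Feller property of \cite{HM18} together with the exponential ergodicity proved in \cite{HS19}) forces $\tilde\mu = \mu$, and since every subsequence has a further convergent subsequence with the same limit, the whole net converges.

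For the $\nu^n_\varepsilon$ statement I would not redo the entire argument; instead I would use the explicit density
\[
d(\iota_*^\varepsilon \nu_\varepsilon^n)(y) = \CZ_{n,\varepsilon}^{-1} \exp\!\big(\beta F_n(\scal{y,\psi})\big)\, d(\iota_*^\varepsilon \mu_\varepsilon)(y)
\]
on $C^\alpha$. Because $F_n$ is bounded by $n^4+1$, the Radon--Nikodym factor $y\mapsto \exp(\beta F_n(\scal{y,\psi}))$ is bounded and continuous on $C^\alpha$, so weak convergence of $\iota_*^\varepsilon \mu_\varepsilon$ to $\mu$ transfers immediately: both numerator and normalising constant converge, and the limit is
\[
\nu^n = \CZ_n^{-1}\exp\!\big(\beta F_n(\scal{\cdot,\psi})\big)\, d\mu\;,
\]
which is precisely the natural definition of $\nu^n$ in this paper.

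The main obstacle I anticipate is the second step: making the limit exchange $\EE[G(\iota^{\varepsilon_k} X^{\varepsilon_k}(t))] \to \EE[G(\Phi(t))]$ rigorous requires a convergence-of-approximations result for \eqref{e:Phi} that is robust with respect to random initial data in $C^\alpha$ with only tightness control, rather than a fixed deterministic initial condition. This is exactly what Section~\ref{s:Dreg} is set up to provide, and once the continuity of the solution map in the initial condition (at positive times) is in place, the argument above becomes routine.
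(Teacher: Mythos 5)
The core difficulty with your proposal is the tightness step. You propose to get tightness of $\iota_*^\varepsilon\mu_\varepsilon$ by running the \emph{discrete} dynamics from $\mu_\varepsilon$ and applying a discrete, $\varepsilon$-uniform version of the ``coming down from infinity'' bound of Theorem~\ref{t:A Priori Bound}. The paper explicitly flags this route as currently unavailable: the remark immediately after Theorem~\ref{t:Weak Convergence} states that Moinat--Weber's boundary-value-free Schauder estimate relies on exact Taylor remainder formulae that have no discrete analogue, so that a priori bound has \emph{not} been established uniformly in grid scale. The paper sidesteps the problem by citing \cite{GH21} for tightness of $\iota_*^\varepsilon\mu_\varepsilon$. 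So your first step, as written, would require proving a result that the authors themselves say is open (albeit probably only a technical obstruction). Note also that the paper does not need full tightness of $\{\nu^n\}$ for its conclusion (see Corollary~\ref{c:ball of positive mass}) precisely because the discrete-uniform a priori bound is unavailable.

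Once tightness is granted, your second step is in spirit the same as the paper's, though the paper packages it differently and avoids some of the subtleties you flag. Rather than showing any subsequential limit is invariant and then invoking uniqueness, the paper bounds $|\mu(f)-\iota_*^\varepsilon\mu_\varepsilon(f)|$ directly by
\[
|\mu(f)-\CP_t\iota_*^\varepsilon\mu_\varepsilon(f)| + |\CP_t\iota_*^\varepsilon\mu_\varepsilon(f)-\iota_*^\varepsilon\CP_t^\varepsilon\mu_\varepsilon(f)|\;,
\]
controlling the first term uniformly in $\varepsilon$ by the quantitative Harris/ergodicity estimate from \cite{HS19} at large fixed $t$, and the second by restricting to a compact set from tightness and using the locally uniform (in initial condition) convergence from Theorem~\ref{t:Convergence}. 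This buys a cleaner argument because it only needs convergence of discretisations uniformly over compacts of \emph{deterministic} initial data, and no Skorokhod coupling of random initial conditions. Your formulation can be made rigorous but has more moving parts, as you anticipate.

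For $\nu^n_\varepsilon$, your bounded-continuous-density shortcut does prove that $\iota_*^\varepsilon\nu^n_\varepsilon$ converges weakly to the measure $\CZ_n^{-1}\exp(\beta F_n(\scal{\cdot,\psi}))\,d\mu$. However, $\nu^n$ in the paper is \emph{defined} as the invariant measure for \eqref{e:TPsi}, not by that density formula; the identification of the two is exactly the content of Corollary~\ref{c: density}, whose proof uses the convergence $\iota_*^\varepsilon\nu^n_\varepsilon\to\nu^n$ that you are trying to establish. As stated, your $\nu^n$ argument is circular. You would still need to show that the density-defined measure is invariant for \eqref{e:TPsi}, which amounts to redoing the semigroup argument (this is what the paper does, verifying the Harris hypotheses for \eqref{e:TPsi} via coming-down-from-infinity and full support through a Girsanov transformation).
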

\begin{proof}
	We begin with the case of $\iota_*^\varepsilon \mu_\varepsilon$. From the results of \cite{GH21}, the family $\iota_*^\varepsilon \mu_\varepsilon$ is tight (they apply their result to the measures on expanding tori, however their bounds are all uniform in the length of the torus considered). Hence, it suffices to show that $\mu$ is the unique limit point of the sequence $\iota_*^\varepsilon \mu_\varepsilon$. Then by \cite[Chapter 4, Theorem 4.5]{EK09} it suffices to show that if $\tilde{\mu}$ is a weak limit point of $\iota_*^\varepsilon \mu_\varepsilon$ then $\tilde{\mu}(f) = \mu(f)$ for all bounded Lipschitz functions $f$ on $C^\alpha$ with Lipschitz constant at most $1$ since this set of functions separates points in $C^\alpha$.
	
	Fix such a Lipschitz function $f$ on $C^\alpha$. We will show that $\iota_*^\varepsilon \mu_\varepsilon(f) \to \mu(f)$ which is certainly sufficient for our goal.
	
	By exploiting invariance, we begin with the simple bound
	\begin{equ}\label{e:Initial weak bound}
		|\mu(f) - \iota_*^\varepsilon \mu_\varepsilon(f)| \leq |\mu(f) - \CP_t \iota_*^\varepsilon \mu_\varepsilon(f)| + |\CP_t \iota_*^\varepsilon \mu_\varepsilon(f) - \iota_*^\varepsilon \CP_t^\varepsilon \mu_\varepsilon(f)|
	\end{equ}
	where $\CP_t, \CP_t^\varepsilon$ are the semigroups associated with \eqref{e:Phi} and \eqref{e:DPhiRenorm} respectively.
	
	To control the first term in \eqref{e:Initial weak bound}, we note that the proof of \cite[Corollary~1.9]{HS19} shows that $\CP_t$ satisfies the hypotheses of Harris' Theorem (see e.g.~\cite[Theorem~1.2]{Harris}) and in particular, one even has that $\|\mu - \CP_t \iota_*^\varepsilon \mu_\varepsilon\|_{\text{TV}} \to 0$ at exponential rate as $t \to \infty$.
	
	Therefore for fixed $\eta > 0$, we may fix $t$ sufficiently large such that 
	$$Q_1\defeq|\mu(f) - \CP_t \iota_*^\varepsilon \mu_\varepsilon(f)| < \frac{\eta}{4}$$
	uniformly in $\varepsilon$. 
	
	We now turn to controlling the second term on the right hand side of \eqref{e:Initial weak bound} for this fixed value of $t$. We will prove bounds that would be strong enough to control this term in Wasserstein-1 distance. For this, we write
	$$\CP_t \iota_*^\varepsilon \mu_\varepsilon(f) - \iota_*^\varepsilon \CP_t^\varepsilon \mu_\varepsilon(f) = \int_{C^\alpha} \mathbb{E}[f(\Phi_\phi(t)) - f(\iota^\varepsilon \Phi_{\FP_\varepsilon \phi}^\varepsilon(t))]  \iota_*^\varepsilon \mu_\varepsilon(d\phi)$$
	where $\Phi_\phi(t), \Phi_{\FP_\varepsilon \phi}^\varepsilon(t)$) are the solutions to \eqref{e:Phi}, \eqref{e:DPhiRenorm} started from $\phi$ and $\FP_\varepsilon \phi$) respectively and we have used the fact that $\FP_\varepsilon \defeq \langle \cdot, \varepsilon^{-3} 1_{\{\|\cdot - x\|_\infty \leq \frac{\varepsilon}{2}\}} \rangle$ is a left inverse to $\iota^\varepsilon$ to identify the appropriate initial condition for the discrete dynamic.
	
	By tightness of $\{\iota_*^\varepsilon \mu_\varepsilon: \varepsilon = 2^{-n}, n \geq 1\}$ and boundedness of $f$, there exists a compact set $\CK_\eta$ such that
	$$Q_2 \defeq \int_{\CK_\eta^c} \mathbb{E}[|f(\Phi_\phi(t)) - f(\iota^\varepsilon \Phi_{\FP_\varepsilon \phi}^\varepsilon(t))|]  \iota_*^\varepsilon \mu_\varepsilon(d\phi) < \frac{\eta}{4}.$$
	It remains to consider the integral over $\CK_\eta$. For this, the crucial remark is that it follows from Theorem~\ref{t:Convergence} that $\|\Phi_\phi(t) - \iota^\varepsilon \Phi_{\FP_\varepsilon\phi}^\varepsilon(t)\|_{C^\alpha} \to 0$ as $\varepsilon \to 0$ in probability uniformly over $\phi \in \CK_\eta$. 
	
	As a result, there exists an $\varepsilon_0 \in (0,1)$ such that $ \varepsilon<\varepsilon_0$ implies that $$Q_3 \defeq\mathbb{P}\left(\exists \phi \in \CK_\eta \text{ such that }\|\Phi_\phi(t) - \iota^\varepsilon \Phi_{\FP_\varepsilon\phi}^\varepsilon(t)\|_{C^\alpha} \geq \frac{\eta}{4}\right) \leq \frac{\eta}{8\|f\|_\infty}.$$
	
	Hence, since $f$ is Lipschitz continuous with Lipschitz constant at most $1$, for $\varepsilon < \varepsilon_0$ one has the estimate
	\begin{align*}
	\int_{\CK_\eta} \mathbb{E}[|f(\Phi_\phi(t)) - f(\iota^\varepsilon \Phi_{\FP_\varepsilon \phi}^\varepsilon(t))|]  \iota_*^\varepsilon \mu_\varepsilon(d\phi) \leq 2Q_3 \|f\|_\infty + \frac{\eta}{4} \leq \frac{\eta}{2}.
	\end{align*}
	Combining these estimates, we see that
	\begin{equation*}
	|\CP_t \iota_*^\varepsilon \mu_\varepsilon(f) - \iota_*^\varepsilon \CP_t^\varepsilon \mu_\varepsilon(f)| \leq \frac{3\eta}{4}\;.
	\end{equation*}
	Substituting this bound, along with the bound on $Q_1$ into \eqref{e:Initial weak bound} yields that for $\varepsilon< \varepsilon_0$
	$$|\mu(f) - \iota_*^\varepsilon \mu_\varepsilon(f)| < \eta\;.$$
	which completes the proof for $\iota_*^\varepsilon \mu_\varepsilon$.
	
	It remains to consider $\iota_*^\varepsilon \nu_\varepsilon^n$. The proof will be the same once we obtain tightness and the hypotheses of Harris' theorem. 
	For tightness (at fixed $n$), we first note that since \eqref{e:DPsiRenorm} is nothing but a system of SDEs, a simple calculation using the generator of this system shows that $$d\nu_\varepsilon^n = \CZ_{n, \varepsilon}^{-1} \exp(\beta F_n(\langle \iota^\varepsilon \cdot, \psi \rangle)) d\mu_\varepsilon$$ 
	where $\CZ_{n, \varepsilon}$ is a normalising factor. Hence the desired tightness follows immediately from the fact that $|F_n|$ is bounded and tightness of $\{\iota_*^\varepsilon \mu_\varepsilon: \varepsilon = 2^{-n}, n \geq 1\}$.
	
	It remains to verify the bounds of Harris' theorem. Here we will only point out the adaptations needed to \cite[Corollary 1.9]{HS19}. The results in Section~\ref{s: Cont Bounds} give the required `coming down from infinity property' so that all that remains is to see that $\nu^n$ has full support. This follows from full support of $\mu$ \cite[Theorem 1.8]{HS19} via Girsanov transformation. Indeed, from the solution theory of Section~\ref{s:reg}, if $\Psi$ is the solution of \eqref{e:TPsi} then one can consider the function $H_n \defeq F_n'(\langle \Psi, \psi \rangle) \psi$. $H_n$ is bounded, smooth in space and $\eta$-H\"older continuous in time for $\eta > 0$ sufficiently small. The solution to \eqref{e:TPsi} then coincides with the solution to the equation
	$$\partial_t \Psi^{(n)} = \Delta \Psi^{(n)} + \infty \Psi^{(n)} - (\Psi^{(n)})^3 + \beta H_n +  \xi.$$
	In particular, if $\xi$ is a $\mathbb{P}$-space-time white noise then there exists an equivalent measure $\mathbb{Q}$ such that $H_n + \xi$ is a space-time white noise \cite{Al98}. As one would expect, the results of \cite[Sections 4 and 5.1]{HM18} then verify that if $\Phi$ is the $\mathbb{P}$-solution for \eqref{e:Phi} then under $\mathbb{Q}$, $\Phi$ solves the above equation and hence also \eqref{e:TPsi}. Since $\mathbb{P}$ and $\mathbb{Q}$ are equivalent measures, this gives the desired result.
\end{proof}
\begin{remark}
	Whilst in the proof of Theorem~\ref{t:Weak Convergence}, we control the first term on the right hand side of \eqref{e:Initial weak bound} in total variation distance and the second term in Wasserstein-1 distance, we do not conclude a stronger form of convergence than weak convergence since on $C^\alpha$, neither of total variation convergence and Wasserstein-1 convergence implies the other.
\end{remark}
\begin{remark}
	One may hope to remove the dependence on the results of \cite{GH21} in the proof of Theorem \ref{t:Weak Convergence} by adapting the bounds of \cite{MW18a} to the discrete setting (uniformly in grid scale) and using the fact that these bounds are uniform in the initial condition to conclude tightness of the invariant measures. Unfortunately, in the proof of their boundary value free Schauder estimate, \cite{MW18a} rely on precise formulae for Taylor remainders that are not available in the discrete setting. This is almost certainly a purely technical barrier that could be overcome to make the argument given in this paper independent of the framework of paracontrolled calculus leveraged in \cite{GH21}.
\end{remark}
\begin{corollary}\label{c: density}
	The measure $\nu^n$ has density $\CZ_n^{-1} \exp(\beta F_n(\langle \cdot, \psi \rangle))$ with respect to $\mu$.
\end{corollary}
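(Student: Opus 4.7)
The plan is to pass to the limit in the explicit density relation between the discrete invariant measures, using Theorem~\ref{t:Weak Convergence} together with the fact that $F_n$ is bounded by construction.

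First I would record the consequence of pushing forward the identity $d\nu_\varepsilon^n = \CZ_{n,\varepsilon}^{-1} \exp(\beta F_n(\langle \iota^\varepsilon \cdot, \psi \rangle)) \, d\mu_\varepsilon$ (noted in the proof of Theorem~\ref{t:Weak Convergence}) under the map $\iota^\varepsilon$. For any bounded measurable $f$ on $C^\alpha$, the change-of-variables formula gives
\begin{equs}
	\int_{C^\alpha} f(\Phi) \, d(\iota_*^\varepsilon \nu_\varepsilon^n)(\Phi)
	= \CZ_{n,\varepsilon}^{-1} \int_{C^\alpha} f(\Phi) \exp\bigl(\beta F_n(\langle \Phi, \psi \rangle)\bigr) \, d(\iota_*^\varepsilon \mu_\varepsilon)(\Phi)\;,
\end{equs}
so that $\iota_*^\varepsilon \nu_\varepsilon^n$ has density $\CZ_{n,\varepsilon}^{-1} \exp(\beta F_n(\langle \cdot, \psi \rangle))$ with respect to $\iota_*^\varepsilon \mu_\varepsilon$.

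Next I would observe that, since $\psi$ is a smooth test function and $F_n$ is smooth and globally bounded (with $\|F_n\|_\infty \leq \frac14 n^4 + 1$), the map $\Phi \mapsto \exp(\beta F_n(\langle \Phi, \psi \rangle))$ is bounded and continuous on $C^\alpha$. Taking $f \equiv 1$ above and using weak convergence $\iota_*^\varepsilon \mu_\varepsilon \to \mu$ from Theorem~\ref{t:Weak Convergence}, one gets
\begin{equs}
	\CZ_{n,\varepsilon} = \int_{C^\alpha} \exp\bigl(\beta F_n(\langle \Phi, \psi \rangle)\bigr) \, d(\iota_*^\varepsilon \mu_\varepsilon)(\Phi) \;\longrightarrow\; \int_{C^\alpha} \exp\bigl(\beta F_n(\langle \Phi, \psi \rangle)\bigr) \, d\mu(\Phi) \eqdef \CZ_n\;,
\end{equs}
which in particular is finite and nonzero.

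Finally I would test the density identity against an arbitrary bounded continuous function $f$ on $C^\alpha$. The left-hand side converges to $\int f \, d\nu^n$ by weak convergence $\iota_*^\varepsilon \nu_\varepsilon^n \to \nu^n$, while the right-hand side converges to $\CZ_n^{-1} \int f(\Phi) \exp(\beta F_n(\langle \Phi, \psi \rangle)) \, d\mu(\Phi)$ since $f \cdot \exp(\beta F_n(\langle \cdot, \psi \rangle))$ is bounded and continuous. Since bounded continuous functions are measure-determining on $C^\alpha$, this yields the claimed density. There is no genuine obstacle here beyond the boundedness of $F_n$, which is precisely the point of having introduced the truncation; this is why the same argument does not immediately give a density formula for the full $V$ and necessitates the further limiting procedure of Section~\ref{s:Main Result}.
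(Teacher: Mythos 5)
Your argument is correct and follows essentially the same route as the paper: push forward the discrete density identity under $\iota^\varepsilon$, use weak convergence (Theorem~\ref{t:Weak Convergence}) together with boundedness and continuity of $\exp(\beta F_n(\langle\cdot,\psi\rangle))$ to get $\CZ_{n,\varepsilon}\to\CZ_n$, and then match both sides against bounded continuous test functions. Your write-up simply spells out a few steps (the $f\equiv 1$ case and the measure-determining property) that the paper leaves implicit.
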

\begin{proof}
	As noted in the previous proof, $$d\nu_\varepsilon^n = \CZ_{n, \varepsilon}^{-1} \exp(\beta F_n(\langle \iota^\varepsilon \cdot, \psi \rangle)) d\mu_\varepsilon\;,$$
	and	hence
	$$ d\iota_*^\varepsilon \nu_\varepsilon^n = \CZ_{n,\varepsilon}^{-1} \exp(\beta F_n(\langle  \cdot, \psi \rangle)) d\iota_*^\varepsilon \mu_\varepsilon\;.$$
	
	By the weak convergence of $\iota_*^\varepsilon \mu_\varepsilon$, $\CZ_{n, \varepsilon} \to \CZ_n$ as $\varepsilon \to 0$.
	Therefore, by the same weak convergence, the integrals against $\CZ_n^{-1} \exp(\beta F_n(\langle \cdot, \psi \rangle)) d \mu$ and $d \nu^n$ agree on continuous bounded functions and hence the two measures are equal.
\end{proof}

\begin{lemma}\label{l:tightness}
	For $\beta$ small enough, the family of measures $\{\nu^n: n \in \mathbb{N}\}$ is tight.
\end{lemma}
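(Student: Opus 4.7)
The plan is to combine the invariance of each $\nu^n$ under the semigroup generated by \eqref{e:TPsi} with the initial-condition-free a priori bound of Theorem~\ref{t:A Priori Bound}. Because that bound controls $\|\Psi^{(n)}(t)\|_{C^\alpha}$ for $t$ bounded away from zero by a quantity that depends only on the driving noise (and not on the starting point), integrating against the invariant measure $\nu^n$ reduces tightness to a uniform tail estimate on a random variable built purely from the noise tower.

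Concretely, fix $\beta>0$ small enough that Theorem~\ref{t:A Priori Bound} applies, and fix $t_0\in(0,1)$, say $t_0=\tfrac12$. Since the $\nu^n$ live on $C^\alpha$ and the embedding $C^{\alpha'}\hookrightarrow C^\alpha$ is compact, it suffices to show that for every $\eta>0$ there exists $K=K(\eta)$ such that
$$\nu^n\bigl(\{\|\Phi\|_{C^{\alpha'}}>K\}\bigr)<\eta \qquad \text{for all } n\in\mathbb{N}.$$
Invariance of $\nu^n$ under \eqref{e:TPsi} gives
$$\nu^n\bigl(\{\|\Phi\|_{C^{\alpha'}}>K\}\bigr)=\int \mathbb{P}\bigl(\|\Psi^{(n)}_\phi(t_0)\|_{C^{\alpha'}}>K\bigr)\,d\nu^n(\phi),$$
where $\Psi^{(n)}_\phi$ denotes the solution of \eqref{e:TPsi} started from $\phi$.

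Applying Theorem~\ref{t:A Priori Bound} with $\alpha'$ in place of $\alpha$ (legitimate by the very choice of $\alpha'$ announced in the notation section, which ensures that all such results carry over) and with e.g.\ $R=\tfrac14$ yields an almost sure bound $\|\Psi^{(n)}_\phi(t_0)\|_{C^{\alpha'}}\le Y$, where $Y$ is a random variable depending only on norms of the finitely many elements of the noise tower $\fT$; crucially, $Y$ is independent of both $\phi$ and $n$. Since these stochastic objects lie in fixed Wiener chaoses, $Y$ has moments of every polynomial order, so $\mathbb{P}(Y>K)\to 0$ as $K\to\infty$. Choosing $K$ with $\mathbb{P}(Y>K)<\eta$ and inserting into the previous display yields the required uniform tail bound, and tightness on $C^\alpha$ follows at once from the compact embedding.

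The main obstacle is verifying that the conclusion of Theorem~\ref{t:A Priori Bound} really does upgrade to the $C^{\alpha'}$ norm with the same uniformity in the initial condition and in $n$. Either one repeats the argument of Section~\ref{s: Cont Bounds} with $\alpha'$ replacing $\alpha$ throughout (which is the whole point of introducing $\alpha'$ in the first place), or one applies a parabolic Schauder estimate to the remainder obtained after subtracting the stochastic data, using the fact that at positive time the solution inherits better-than-$\alpha$ regularity from the linear part of the equation. Either route delivers a bound of the desired shape, and the rest of the argument is then purely soft.
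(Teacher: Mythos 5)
Your proposal follows essentially the same approach as the paper: exploit invariance of $\nu^n$ together with the initial-condition- and $n$-independent a priori bound to get uniform tail control in $C^{\alpha'}$, then conclude by the compact embedding $C^{\alpha'}\hookrightarrow C^\alpha$. The $\alpha'$-upgrade that you flag as the main obstacle is in fact immediate from the form of Theorem~\ref{t:continuum bound}: it bounds $v=u-\<1>$ in the sup norm (hence in $C^{\alpha'}$), while $\<1>$ is itself controlled in a slightly stronger negative H\"older norm by $[\<1>]_\kappa$, so $u=v+\<1>$ inherits the desired $C^{\alpha'}$ bound without repeating the proof of the a priori estimate or invoking a Schauder step.
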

\begin{proof}
	Since the bounds of Section~\ref{s: Cont Bounds} (see Theorem~\ref{t:continuum bound}) are uniform in the initial condition and in $n$, for any $\gamma > 0$ there is a $K$ such that
	$$\sup_{n} \mathbb{P}(\|\Psi^{(n)}(1)\|_{C^{\alpha'}} \geq K) \leq \gamma\;,$$
	where $\Psi^{(n)}$ is the solution to \eqref{e:TPsi} started from $\nu^n$. Since $\nu^n$ is invariant for these dynamics, this is nothing but
	$$\inf_{n} \nu^n(\CB_{\alpha'}(K)) \geq 1 - \gamma\;,$$
	where $\CB_{\alpha'}(K)$ is the closed ball of radius $K$ in $C^{\alpha'}$. Since the embedding $C^{\alpha'} \to C^\alpha$ is compact, this implies the desired result.
\end{proof}
Whilst in this case it is possible to establish tightness of the family $\nu^n$, such a strong condition is not actually necessary to apply our techniques. The above proof has as a corollary the following result, which is enough for our purposes.
\begin{corollary}\label{c:ball of positive mass}
	There exists $K > 0$ such that $\inf_{n} \nu^n(\CB_\alpha(K)) \geq \frac12$.
\end{corollary}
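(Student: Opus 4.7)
The plan is to observe that Corollary~\ref{c:ball of positive mass} is essentially a restatement of what is already shown inside the proof of Lemma~\ref{l:tightness}, with the compactness of the embedding $C^{\alpha'} \hookrightarrow C^\alpha$ downgraded to mere continuity. The only quantitative input needed is the a priori bound from Section~\ref{s: Cont Bounds} (specifically Theorem~\ref{t:continuum bound}), which provides moment-type control on $\|\Psi^{(n)}(1)\|_{C^{\alpha'}}$ that is uniform in both the initial condition and in $n$.

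Concretely, I would first apply the argument already spelled out in Lemma~\ref{l:tightness} with $\gamma = \tfrac12$: by the uniform a priori bound, there exists $K_0 > 0$ such that
\begin{equation*}
\sup_n \mathbb{P}\bigl(\|\Psi^{(n)}(1)\|_{C^{\alpha'}} \geq K_0\bigr) \leq \tfrac12\;.
\end{equation*}
Starting the dynamics from the invariant measure $\nu^n$, which is legitimate by the construction already used in Lemma~\ref{l:tightness}, the left-hand side equals $\nu^n\bigl(\|\cdot\|_{C^{\alpha'}} \geq K_0\bigr)$, so that
\begin{equation*}
\inf_n \nu^n(\CB_{\alpha'}(K_0)) \geq \tfrac12\;.
\end{equation*}

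Next, since $\alpha' = \alpha + \tfrac{\kappa}{2} > \alpha$, the continuous embedding $C^{\alpha'} \hookrightarrow C^\alpha$ yields a constant $C \geq 1$ with $\|\cdot\|_{C^\alpha} \leq C \|\cdot\|_{C^{\alpha'}}$, and therefore $\CB_{\alpha'}(K_0) \subseteq \CB_\alpha(C K_0)$. Setting $K \defeq C K_0$ gives $\nu^n(\CB_\alpha(K)) \geq \nu^n(\CB_{\alpha'}(K_0)) \geq \tfrac12$ uniformly in $n$, which is the conclusion.

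There is no real obstacle here beyond what has already been handled: the substantive work is the a priori bound of Theorem~\ref{t:continuum bound}, whose uniformity in $n$ and in the initial condition is precisely what makes this step free. Note also that although Lemma~\ref{l:tightness} requires the compactness of $C^{\alpha'} \hookrightarrow C^\alpha$ to convert a $C^{\alpha'}$-ball into a relatively compact set in $C^\alpha$, for this corollary only continuity of the embedding is needed, since $\CB_\alpha(K)$ is already closed (and so measurable) in $C^\alpha$ without appealing to compactness.
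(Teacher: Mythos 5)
Your proof is correct and follows essentially the same route as the paper, which simply observes that Corollary~\ref{c:ball of positive mass} is an immediate by-product of the proof of Lemma~\ref{l:tightness}: take $\gamma = \tfrac12$ there and pass from the $C^{\alpha'}$-ball to a $C^\alpha$-ball via the continuous embedding. (A minor remark: with the definitions used here one in fact has $\|\cdot\|_{C^\alpha} \le \|\cdot\|_{C^{\alpha'}}$ directly since $\lambda^{-\alpha} \le \lambda^{-\alpha'}$ for $\lambda\in(0,1]$, so the embedding constant $C$ can be taken equal to $1$, but introducing it does no harm.)
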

\begin{remark} \label{r: Blow-Up of Bounds}
	Until Lemma \ref{l:tightness}, the results stated in this section could have been formulated uniformly in the size of the torus taken as the spatial domain. However, implicit in the proof of this Lemma is a dependency on the size of the torus since the constant $K$ depends on the various norms of trees appearing in Theorem \ref{t:A Priori Bound} which explode as the spatial domain is expanded to the whole of $\mathbb{R}^3$.
\end{remark}
\begin{proof}[Proof of Theorem~\ref{t:main result} for Fixed Volume]
	By Corollary~\ref{c:ball of positive mass}, there exists $K > 0$ such that by writing $\CK= \CB_\alpha(K)$, one has that $\mu(\CK) \geq \frac12$ and
	$$\inf_n \nu^n(\CK) = \inf_n \CZ_n^{-1} \int_\CK \exp(\beta F_n(\langle \cdot, \psi \rangle)) d \mu \geq \frac12\;.$$
	Since $\langle \Phi, \psi \rangle \leq \|\Phi\|_{C^\alpha}$, $\exp(\beta F_n(\langle \cdot, \psi \rangle)) \leq C \defeq \exp(\beta K^4)$ on $\CK$. Hence $\frac12 \leq \CZ_n^{-1} C \mu(\CK) $ and so
	$$\CZ_n \leq 2C.$$
	We have that $\CZ_n = \int_{C^\alpha} \exp(\beta F_n(\langle \cdot, \psi \rangle)) d\mu$ and so the result follows by Fatou's lemma.
\end{proof}

\section{Volume Independent Bounds}\label{s: Extensions}

In this section, we give the necessary adaptations to the main techniques of this paper to obtain the result stated in Theorem \ref{t:main result} uniformly in the size of the torus. Since the ideas are almost the same as those given in Section~ \ref{s:Main Result}, we mainly seek to highlight the points in the argument at which one must make adaptations to deal with the particular details in the statements of these results. 

As was mentioned in Remark \ref{r: Blow-Up of Bounds}, the single point at which the argument of Section \ref{s:Main Result} is not uniform in the size of the torus is in the choice of constant $K$ in the proof of Lemma \ref{l:tightness}. The necessity of enlarging $K$ as the size of torus considered grows comes from the fact that the H\"older-type seminorms of the various trees in the statement of Theorem~\ref{t:continuum bound} grow with the size of the spatial domain. 

Our strategy to overcome this issue is based on the observation that $V(\Phi)$ depends on $\Phi$ only via its behaviour on the support of $\psi$. Hence we are able to adapt the proof of Lemma~\ref{l:tightness} to include a localisation in space and overcome the dependency on torus length.

\begin{proof}[Proof of Theorem~\ref{t:main result}] 
	Throughout the proof we fix $M$ sufficiently large such that $\operatorname{supp} \psi \subseteq [-M,M]^3$ so that all functions can naturally be interpreted as functions on the torus $\Torus_M$.
	
	We denote by $\mu_M$ and $\nu_M^n$ the invariant measures for the $\Phi_3^4$ and $\Psi_3^{4,n}$ equations on $\Torus_M$ respectively. Exactly as in the proof of Corollary~\ref{c: density}, we obtain that
	$$d \nu_M^n = \CZ_{n,M}^{-1} \exp(\beta F_n(\langle \cdot, \psi \rangle)) d \mu_M.$$
	
	Let $\CK_\psi$ denote the 1-fattening of the support of $\psi$ and define the measures $\tilde{\nu}_M^n = \CM_*\nu_M^n$ and $\tilde{\mu}_M = \CM_* \nu_M$ to be the pushforward measures by the operator $\CM$ which acts on a distribution via multiplication with $1_{\CK_\psi}$. From the above, we then have that
	$$d \tilde{\nu}_M^n = \CZ_{n,M}^{-1} \exp(\beta F_n(\langle \cdot, \psi \rangle)) d \tilde{\mu}_M.$$
	
	We observe that
	\begin{equs}
		\int_{C^\alpha} \exp(V_M(\Phi)) \mu_M(d\Phi) &= \int_{C^\alpha} \exp(V_M(\CM \Phi)) \mu_M(d\Phi)
		\\ &= \int_{C^\alpha} \exp(V_M(\Phi)) \tilde{\mu}_M(d\Phi)
	\end{equs}
	so that it suffices to consider the latter integral. 
	
	This can now be achieved via the same ideas as in Section~\ref{s:Main Result} provided we are careful to be explicit about the values of constants. Indeed, one has that
	\begin{equation} \label{e: inf vol invariance equation}
		\inf_{n,M} \tilde{\nu}_M^n(\CB_\alpha(K)) = \inf_{n,M} \mathbb{P}(\|\Psi_M{(n)}(\gamma) 1_{\CK_\psi}\|_{C^\alpha} \leq K)
	\end{equation}
	where $\Psi_M^{(n)}$ is the solution of \eqref{e:TPsi} and $\gamma \in (0,1)$.
	
	Theorem~\ref{t: localised continuum bound} then implies that one can choose $\gamma$ sufficiently small (independently of $n$ and $M$) such that for $K$ sufficiently large one has that the latter quantity in \eqref{e: inf vol invariance equation} is greater than $\frac{1}{2}$.
	
	The remainder of the proof then follows as in the proof of Theorem~\ref{t:main result} for a fixed volume given in Section~\ref{s:Main Result}, noting that the constant $C$ appearing is at this point independent of $M$ since the various seminorms of the trees appearing are localised to a domain that is chosen independently of $M$.
\end{proof}

\section{Regularity Structures and Inhomogeneous Models}\label{s:reg}

In this section we recall the definition of a regularity structure and the framework of inhomogeneous models as developed in \cite{HM15}. The significant difference to the setting of \cite{Hai14} is that in the case of inhomogeneous models, the time variable plays a distinguished role and many objects built in the theory are as a result genuine functions in time. This set-up is convenient for establishing a solution theory for \eqref{e:Psi} since the additional nonlinearity requires the ability to test the solution at a fixed time against some test function in space.

\begin{definition}
	A tuple $\ST = (\CT, \CG)$ is a {\it regularity structure} if:
	\begin{itemize}
		\item $\CT$ is a graded vector space $\CT = \bigoplus_{\alpha \in \CA} \CT_\alpha$, where each $\CT_\alpha$ is a Banach space and $\CA \subset \mathbb{R}$ is a locally finite set. $\CT$ is called the {\it model space} of $\ST$.
		
		\item $\CG$ is a group of linear transformations of $\CT$, such that for every $\Gamma~\in~\CG$, every $\alpha \in \CA$ and every $\Abtau \in \CT_\alpha$ one has
		$\Gamma \Abtau - \Abtau \in \CT_{< \alpha}$, with $\CT_{< \alpha} \defeq \bigoplus_{\beta < \alpha} \CT_\beta$. $\CG$ is called the {\it structure group} of $\ST$.
	\end{itemize}
\end{definition}

\begin{remark}
	We have adopted the convention that elements of regularity structures are coloured blue. This will lend clarity since we will later use a graphical notion in two (similar) ways which will be distinguished by colour. The one exception to this colouring convention is that functions $H: [0,T) \times \mathbb{R}^d \to \CT$ won't be coloured since it is always clear in which space they are valued.
\end{remark}

In our setting, we will always work with regularity structures such that each $\CT_\alpha$ is finite-dimensional and $\CA$ is finite. In particular, there is no ambiguity in the choice of topology.

\begin{assumption}
	Throughout this article, we assume that for a fixed $r > 0$ all regularity structures $\ST= (\CT, \CG)$ contain the structure $\STpoly$ of polynomials of scaled degree at most $r$ introduced in \cite[Remark 2.2]{HM15} in the sense that $\CTpoly \subseteq \CT$ and the restriction of the action of $\CG$ to $\CTpoly$ coincides with that of the group $\CGpoly\simeq (\mathbb{R}^{d+1},+)$
	via a group morphism $\CG \to \CGpoly$.
\end{assumption}

Thus far, the setting described corresponds to the setting of \cite{Hai14} up to the fact that we have insisted on truncating our structures at a fixed maximal homogeneity. However, in what immediately follows we depart from the original definitions used there and instead recall the notion of an inhomogeneous model as in \cite[Definition 2.4]{HM15}.

\begin{definition}\label{d:Model}
	For a regularity structure $\ST= (\CT, \CG)$, an inhomogeneous model is a tuple $((\Pi_x^t)_{(t,x) \in \mathbb{R}^{1+d}}, (\Gamma^t)_{t \in \mathbb{R}}, (\Sigma_x)_{x \in \mathbb{R}^d})$ where
	\begin{itemize}
		\item For $t \in \mathbb{R}$, $\Gamma^t : \mathbb{R}^{d} \times \mathbb{R}^{d} \to \CG$, satisfies the algebraic relations 
		\begin{equ}[e:GammaDef]
			\Gamma^t_{x x}=1\;, \qquad \Gamma^t_{x y} \Gamma^t_{y z} = \Gamma^t_{x z}\;,
		\end{equ}
		for any $x, y, z \in \mathbb{R}^{d}$.
		\item For $x \in \mathbb{R}^d$, $\Sigma_x : \mathbb{R} \times \mathbb{R} \to \CG$ satisfies the algebraic relations
		\begin{equ}[e:SigmaDef]
			\Sigma^{t t}_{x}=1\;, \qquad \Sigma^{s r}_{x} \Sigma^{r t}_{x} = \Sigma^{s t}_{x}\;, \qquad \Sigma^{s t}_{x} \Gamma^{t}_{x y} = \Gamma^{s}_{x y} \Sigma^{s t}_{y}\;,
		\end{equ}
		for any $s,r,t \in \mathbb{R}$ and $y \in \mathbb{R}^d$.
		\item For any $(t,x) \in \mathbb{R}^{1+d}$, $\Pi^t_x: \CT \to \mathcal{S}'(\mathbb{R}^{d})$ satisfies the algebraic relation
		\begin{equ}[e:PiDef]
			\Pi^t_{y} = \Pi^t_x \Gamma^t_{x y} 
		\end{equ}
		for all $y \in \mathbb{R}^{d}$.
	\end{itemize}
	
	Additionally, we impose that the actions of $\Gamma^t_{xy}$ and $\Sigma_x^{st}$ on $\CTpoly$ are given by translation by $(0,y-x)$ and $(t-s,0)$ respectively, and that the maps $\Pi$ on $\CTpoly \subseteq \CT$ are given by
	\begin{equs}
		\bigl(\Pi_x^t \absymbol{X^{(0, \bar k)}}\bigr)(y) = (y-x)^{\bar k}, \quad \bigl(\Pi_x^t \absymbol{X^{(k_0, \bar k)}}\bigr)(y) = 0
	\end{equs}
	for $k_0 > 0$. Finally, for any $\gamma > 0$ and every $T > 0$, we assume that there is a constant $C$ for which the analytic bounds
	\minilab{Model}
	\begin{equs}\label{e:PiGammaBound}
		| \langle \Pi^t_{x} \Abtau, \varphi_{x}^\lambda \rangle| \leq C \| \Abtau \| \lambda^{l} &\;, \qquad \| \Gamma^t_{x y} \Abtau \|_{m} \leq C \| \Abtau \| | x-y|^{l - m}\;,\\
		\| \Sigma^{s t}_{x} \Abtau \|_{m} &\leq C \| \Abtau \| |t - s|^{(l - m)/\s_0}\;,\label{e:SigmaBound}
	\end{equs}
	hold uniformly over all $\Abtau \in \CT_l$, with $l \in \CA$ and $l < \gamma$, all $m \in \CA$ such that $m < l$, all $\lambda \in (0,1]$, all $\varphi \in \CB^r_0(\mathbb{R}^d)$ with $r > -\lfloor\min \CA\rfloor$, and all $t, s \in [-T, T]$ and $x, y \in \mathbb{R}^d$ such that $|t - s| \leq 1$ and $|x-y| \leq 1$.
	
	If additionally, for $\eta > 0$ the bound
	\begin{equs}
		\label{e:PiTimeBound}
		| \langle \bigl(\Pi^t_{x} - \Pi^{s}_{x}\bigr) \Abtau, \varphi_{x}^\lambda \rangle| \leq C \| \Abtau \| |t-s|^{\eta/\s_0} \lambda^{l - \eta}\;,
	\end{equs}
	holds for all $\Abtau \in \CT_l$ and the other parameters as before then we say that $\Pi$ has time regularity $\gamma > 0$.
\end{definition}

As is usual, the collection of maps $(\Pi, \Sigma, \Gamma)$ as above that satisfy the analytic constraints but not necessarily the algebraic constraints is a linear space. For any fixed $T > 0$, this space comes equipped with a norm
\begin{equ}
	\VERT Z \VERT_{\gamma; T} \defeq \| \Pi \|_{\gamma; T} + \| \Gamma \|_{\gamma; T} + \| \Sigma \|_{\gamma; T}\;,
\end{equ}
where $Z = (\Pi, \Gamma, \Sigma)$ and $\|\Pi\|_{\gamma; T}, \|\Gamma\|_{\gamma; T}$ and $\|\Sigma\|_{\gamma; T}$ are the smallest constants $C$ such that the analytic bounds in \eqref{e:PiGammaBound} and \eqref{e:SigmaBound} hold for the relevant object. 

In particular, despite not being a linear subspace of this larger space, the space of inhomogeneous models inherits a ``distance'' that is given for a pair of models $Z, \bar{Z}$ by 
\begin{equ}[e:ModelsDist]
	\VERT Z; \bar{Z} \VERT_{\gamma; T} \defeq \| \Pi - \bar{\Pi} \|_{\gamma; T} + \| \Gamma - \bar{\Gamma} \|_{\gamma; T} + \| \Sigma - \bar{\Sigma} \|_{\gamma; T}\;.
\end{equ}
Additionally, if $\Pi$ has time regularity $\eta > 0$ then we can account for this by defining $\|\Pi\|_{\eta, \gamma;T} \defeq \|\Pi\|_{\gamma;T} + C'$ where $C'$ is the smallest constant such that the bound \eqref{e:PiTimeBound} holds. We then define $\VERT Z \VERT_{\eta, \gamma; T}$ and $\VERT Z, \bar{Z}\VERT_{\eta, \gamma; T}$ analogously to the definitions above, replacing all instances of $\|\Pi\|_{\gamma;T}$ with $\|\Pi\|_{\eta, \gamma; T}$.


\subsection{Inhomogeneous Modelled Distributions}

In the following definition, we consider a fixed regularity structure $\ST = (\CT, \CG)$ with inhomogeneous model $Z=(\Pi, \Gamma, \Sigma)$, parameters $\gamma, \eta \in \mathbb{R}$, a time $T > 0$, and 
$H : (0, T] \times \mathbb{R}^{d} \to \CT_{<\gamma}$. We define the quantity
\begin{equs}[e:ModelledDistributionNormSpace]
	\Vert H \Vert_{\gamma, \eta; T} \defeq \sup_{t \in (0,T]} &\sup_{x \in \mathbb{R}^d} \sup_{l < \gamma} \onorm{t}^{(l - \eta) \vee 0} \| H_t(x) \|_l\\
	&+ \sup_{t \in (0,T]} \sup_{\substack{x \neq y \in \mathbb{R}^d \\ | x - y | \leq 1}} \sup_{l < \gamma} \frac{\| H_t(x) - \Gamma^{t}_{x y} H_t(y) \|_l}{\onorm{t}^{\eta - \gamma} | x - y |^{\gamma - l}}\;,
\end{equs}
where $l \in \CA$ in the third supremum and $\onorm{t} \defeq |t|^{\frac12} \wedge 1$. This quantity is a partial analogue of the homogeneous $\CD^{\gamma, \eta}$ norm introduced in \cite{Hai14}. However it does not account for any of the behaviour of $H$ in time and as a result, the restriction in the second supremum that $|x-y| \leq \onorm{t,s} \defeq \onorm{t} \wedge \onorm{s}$ appearing in the homogeneous case has been removed. This turns out to be important for obtaining the relevant Schauder estimates; see \cite[Theorem 2.21]{HM15}. 

We then define the {\it inhomogeneous $\CD_T^{\gamma, \eta}$ norm} as

\begin{equ}[e:ModelledDistributionNorm]
	\VERT H \VERT_{\gamma, \eta; T} \defeq \Vert H \Vert_{\gamma, \eta; T} + \sup_{\substack{s \neq t \in (0,T] \\ | t - s | \leq \onorm{t, s}^{\s_0}}} \sup_{x \in \mathbb{R}^d} \sup_{l < \gamma} \frac{\| H_t(x) - \Sigma_x^{t s} H_{s}(x) \|_l}{\onorm{t, s}^{\eta - \gamma} |t - s|^{(\gamma - l)/\s_0}}.
\end{equ}

\begin{definition}
	We define $\CD_T^{\gamma, \eta}(Z)$ to be the space of functions $H: (0,T] \times \mathbb{R}^d \to \CT_{<\gamma}$ such that $\VERT H \VERT_{\gamma, \eta; T} < \infty$.
\end{definition}

\begin{remark}
	Inhomogeneous analogues of the usual Reconstruction Theorem and Schauder estimates hold true in this setting; see \cite[Theorems 2.11 and 2.21]{HM15}. Formulating a suitable fixed point result requires a little more care than in the homogeneous case, but this is also obtained in \cite{HM15}. The subject of the next subsection is a formulation of their construction specific to our setting.
\end{remark}

Given a second model $\bar{Z} = (\bar{\Pi}, \bar{\Gamma}, \bar{\Sigma})$ for $\ST$, we define the distance $\VERT H; \bar{H}\VERT_{\gamma, \eta; T}$ between $H \in \CD_T^{\gamma, \eta}(Z)$ and $\bar{H} \in \CD_T^{\gamma, \eta}(\bar{Z})$ by setting
\minilab{ModelledNorms}
\begin{equs}
	\| H; \bar{H} \|_{\gamma, \eta; T} &\defeq \sup_{t \in (0,T]} \sup_{x \in \mathbb{R}^d} \sup_{l < \gamma} \onorm{t}^{(l - \eta) \vee 0} \| H_t(x) - \bar{H}_t(x) \|_l \\
	& + \sup_{t \in (0,T]} \sup_{\substack{x \neq y \in \mathbb{R}^d \\ | x - y | \leq 1}} \sup_{l < \gamma} \frac{\| H_t(x) - \Gamma^{t}_{x y} H_t(y) - \bar{H}_t(x) + \bar{\Gamma}^{t}_{x y} \bar{H}_t(y) \|_l}{\onorm{t}^{\eta - \gamma} | x - y |^{\gamma - l}}\;,\\
	\VERT H; \bar{H} \VERT_{\gamma, \eta; T} &\defeq \| H; \bar{H} \|_{\gamma, \eta; T}\\
	& + \sup_{\substack{s \neq t \in (0,T] \\ | t - s | \leq \onorm{t, s}^{\s_0}}} \sup_{x \in \mathbb{R}^d} \sup_{l < \gamma} \frac{\| H_t(x) - \Sigma_x^{t s} H_{s}(x) -  \bar{H}_t(x) + \bar{\Sigma}_x^{t s} \bar{H}_{s}(x)\|_l}{\onorm{t, s}^{\eta - \gamma} |t - s|^{(\gamma - l)/\s_0}}\;.
\end{equs}

\subsection{A Truncated Regularity Structure for \eqref{e:Phi}}

It was shown in \cite[Section 8.1]{Hai14} that given a locally subcritical equation with nonlinearity of the form $F(u, \nabla u, \xi)$, one can construct a regularity structure $\ST = (\CT, \CG)$ and an associated space of modelled distributions such that the given equation can be formulated as a fixed point problem is that space. 

The equation \eqref{e:TPsi} does not quite fit immediately into that framework since its nonlinearity contains the non-local term $\beta \langle \Psi^{(n)}, \psi\rangle^3 \psi$. Instead, we consider the regularity structure $\ST$ constructed in \cite{Hai14} for \eqref{e:Phi} and suitably interpret the non-local term of \eqref{e:TPsi} in this regularity structure, allowing us to work with that fixed regularity structure for both equations.

We don't recall here the full details of the construction of \cite{Hai14}. Instead, we mention only that the construction includes the recursive construction of sets of symbols $\CF, \CU$ that one would expect to need to formulate the fixed point argument for \eqref{e:Phi}. 

The set $\CF$ contains the symbols required to describe terms appearing on the right hand side of equation \eqref{e:Phi}. In particular, $\CF$ is a subset of the set of symbols generated from $\{\r1,\Xr{i}, \AbSymXi\}$ under the operations $\Abtau \mapsto \Abint{\tau}$ and $(\Abtau,\absymbol{\bar{\tau}}) \mapsto \Abtau \absymbol{\bar{\tau}}$ and the assumptions that the multiplication is commutative with identity element $\r1$ and that $\Abint{X^k} = \absymbol{0}$ for each $k \in \mathbb{N}^3$.
In the case of \eqref{e:Phi}, the first few symbols of $\CF$ are
$$\CF = \{\r1, \AbSymXi, \<1r>, \<2r>, \<3r>, \<2r>\Xr{i}, \<31r>, \<32r>, \<22r>, \<20r>,\dots\}.$$

Here we have adopted the usual graphical notation of defining $\<1r> \defeq \Abint{\Xi}$ and then defining the remaining rooted trees recursively, where abstract integration is represented by drawing an edge downward from the root and multiplication is represented by concatenation of trees at the root.

Meanwhile, $\CU$ contains the symbols required to describe the solution of equation \eqref{e:Phi}. Concretely, we have that $\CU = \{\absymbol{X^k} : k \in \mathbb{N}^3\} \cup \absymbol{\mathcal{I}}(\mathcal{F})$. In the case of \eqref{e:Phi}, we have that $\mathcal{U} \subseteq \mathcal{F}$.
The regularity structure $\ST$ then has model space $\CT = \spanning \CF$ equipped with the grading $|\cdot|$ defined by
$$|\r1| = 0, \qquad |\Xr{i}| = 1, \qquad |\AbSymXi| = -\frac{5}{2}-\kappa, \qquad |\Abint{\tau}| = |\Abtau| + 2, \qquad |\Abtau \absymbol{\bar{\tau}}| = |\Abtau| + |\absymbol{\bar{\tau}}|$$
where $\kappa > 0$ is chosen sufficiently small.
At this point, the construction given in \cite[Section 8.1]{Hai14} yields a structure group $\CG$ acting on $\CT$ with the desired properties. We will not give details of the construction here but rather will recall the key properties as we need them.

Since we aim to formulate our equations as fixed point problems in a $\CD_T^{\gamma, \eta}$ space, we will not need the symbols beyond a suitably large homogeneity. Hence, for the rest of the paper we will work in the ambient regularity structure $\ST_r$
with model space $\CT_{<r}$ and structure group given by the restriction of $\CG$ to this space. It turns out that for our purposes, it will suffice to take $r = 2+\kappa$ where will choose $\kappa$ sufficiently small so that $\CT_{<r}$ contains only symbols of homogeneity at most $2$.

As is the case in \cite{HM15}, we cannot define a suitable inhomogeneous model on the entirety of the regularity structure $\ST_r$ since a typical choice of lift for space-time white noise will not be a function in time. To circumvent this problem, \cite{HM15} perform a truncation which removes the problematic symbols. For the general definition of truncations they are able to accommodate, see \cite[Definition 3.4]{HM15}. Here, we will only introduce the truncated structure that we require for our problem.

We define the sets $$\CF^{\gen} \defeq \{\<1r>, \<30r>\} \cup \CFpoly\;,\qquad \hat{\CF} = \CF \setminus \{\AbSymXi, \<3r>\}$$
where $\CFpoly = \{\r1, \Xr{i}, \Xr{i}\Xr{j}; i,j = 1,2,3\}$. 
From these sets, we define the {\it generating regularity structure} $\ST^{gen}$ and the {\it truncated regularity structure} $\hat{\ST}$ by setting 
$$ \CT^\gen \defeq \spanning \CF^\gen, \qquad \hat{\CT} \defeq \spanning\{\tau \in \hat{\CF}: |\tau| < r\}\;,$$
each with structure group given by the corresponding restriction of $\CG$. 
We note that $\CG$ leaves both model spaces invariant so that this definition makes sense.

The key feature of this truncation is that the truncated structure still contains the symbols necessary to formulate a suitable version of the fixed point problem for the equation at hand, is small enough so that it admits suitable models $Z = (\Pi, \Gamma, \Sigma)$ and the structure group $\CG$ leaves the truncated structure invariant, so that $(\Gamma, \Sigma)$ naturally extend to $\CT_{<r}$. In particular, this last point means that given a model $Z$ for the truncated structure and $f:\mathbb{R}^d \to \CT_{< \gamma}$, one can still make sense of the statement $f \in \CD_T^{\gamma,\eta}(Z)$ since the relevant norms only depend directly on the action of $(\Gamma, \Sigma)$.

\subsection{The \eqref{e:Psi} Equation}

To complete this section we formulate \eqref{e:TPsi} in the setting described earlier in this section. We remark that the equivalent program for \eqref{e:Phi} was already completed in \cite{HM15}. The modifications for \eqref{e:TPsi} are only minor, but since the nonlinearity is non-local we will briefly outline what needs to be done to accommodate it. 

Defining the maps 
\begin{equ}
	\hat F(\Abtau) = \<3r> - \CQ_{\leq 0}(\Abtau^3)\;, \qquad I_0 = \<1r> - \<30r>\;,
\end{equ}
the abstract analogue of \eqref{e:Phi} is given as in \cite{HM15} by the fixed point problem
\begin{equ}\label{e:AbPhi}
	U = \CP\hat{F}(U) + S\Phi_0 + I_0\;,
\end{equ}
where $\CP \defeq \CK_{\bar{\gamma}} + R_\gamma \CR$ is the usual lift of the action of the heat kernel  and $S\Phi_0$ denotes the lift of the solution of the heat equation with initial condition $\Phi_0$ to the polynomial part of the regularity structure. We refer the reader to equations (2.29), (3.2) and Theorem~2.11 of \cite{HM15} for more details on the definitions of the operators 
appearing on the  right-hand side of this equation.
Formulating the fixed point problem in this way is advantageous since $\hat{F}$ has the property that if $\CT_\CU = \spanning \CU$ then $\hat{F}(\CT_\CU \cap \hat{\CT}) \subseteq \hat{\CT}$.

In order to formulate the corresponding analogue of \eqref{e:TPsi}, we introduce for $n \in \NBar$ the maps $G_n: \CD_T^{\gamma, \eta} \to \CD_T^{\gamma, \eta}$ given by
\begin{align*}
G_n(U) = \begin{cases}
\langle \CR_t U, \psi \rangle^3 \psi \r1 \qquad& n = \infty \\ F_n'(\langle \CR_t U, \psi \rangle) \psi \r1 \qquad& n < \infty
\end{cases} 
\end{align*}
where $\CR_t$ is the reconstruction operator at time $t$ -- see \cite[Thm~2.11]{HM15}. Here we begin to reap the benefits of working in the inhomogeneous setting since $ \langle \CR_t U, \psi \rangle$ is automatically well defined even though the testing is only in space; this would not be automatic in the setting of \cite{Hai14}.

We then define the abstract analogue of \eqref{e:TPsi} ($n \in \NBar$) to be
\begin{equs}
	\label{e:TAbPsi}
	U_n = \CP(\hat{F}(U_n) + \beta G_n(U_n)) + S\Phi_0 + I_0.
\end{equs}

In this setting, one has the following analogue of \cite[Thm~3.10]{HM15}. 

\begin{theorem}\label{t: Fixed Point}
	Let $\gamma = 1+\kappa$ and $\eta \leq \alpha$. Then for any model $Z$ with time regularity $\eta > 0$ on $\hat{\ST}$ and for every periodic $\Phi_0 \in \CC^\eta(\mathbb{R}^d)$, there exists a time $T_* \in (0,\infty]$ such that for every $T < T_*$ the equations \eqref{e:AbPhi}, \eqref{e:TAbPsi} admit a unique periodic solution $U \in \CD_T^{\gamma, \eta}(Z)$ for all $n \in \NBar$. Furthermore, if $T_* < \infty$ then either
	\begin{align*}
	\lim_{T \uparrow T_*} \|\CR_TS_T^\Phi(\Phi_0, Z)_T\|_{\CC^\eta} = \infty
	\end{align*}
	or there is an $n \in \NBar$ such that
	\begin{align*}
	\lim_{T \uparrow T_*} \|\CR_TS_T^{\Psi_n}(\Phi_0, Z)_T\|_{\CC^\eta} = \infty
	\end{align*}
	where $S_T^\Phi, S_T^{\Psi_n}: (\Phi_0, Z) \mapsto U$ are the solution maps for \eqref{e:AbPhi} and \eqref{e:TAbPsi} respectively. 
	
	Additionally, for every $T < T_*$, the solution maps $S_T^\Phi, S_T^{\Psi_n}$ are jointly Lipschitz continuous in a neighbourhood around $(\Phi_0, Z)$ in the sense that, for any $B > 0$ there is $C > 0$ such that, if $\bar{U} = \mathcal{S}_T^\Phi(\bar{\Phi}_0, \bar{Z})$ for some initial data $(\bar{\Phi}_0, \bar{Z})$ where $\bar{Z}$ has time regularity $\eta > 0$, then one has the bound $\VERT U; \bar{U} \VERT_{\gamma, \eta; T} \leq C \eta$, provided $\Vert \Phi_0 - \bar{\Phi}_0 \Vert_{\CC^{\eta}} + \VERT Z; \bar{Z} \VERT_{\gamma; T} \leq \eta$, for any $\eta \in (0,B]$ and similarly for $S_T^{\Psi_n}$.
\end{theorem}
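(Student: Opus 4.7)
The plan is to follow the scheme of \cite[Thm~3.10]{HM15}, which already handles the local term $\CP\hat{F}(U)$, and to extend their fixed point argument to accommodate the additional non-local term $\beta \CP G_n(U)$ appearing in \eqref{e:TAbPsi}. Both equations can then be formulated as fixed point problems for contractions on a suitable closed ball in $\CD_T^{\gamma, \eta}(Z)$ with $T$ sufficiently small, yielding local existence, uniqueness, and Lipschitz continuity in $(\Phi_0, Z)$ by the usual perturbation estimates.

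The main new work is to show that for each $n \in \NBar$, the map $G_n : \CD_T^{\gamma, \eta}(Z) \to \CD_T^{\gamma, \eta}(Z)$ is well-defined and locally Lipschitz with bounds that are uniform on bounded subsets. First I would invoke the inhomogeneous reconstruction theorem \cite[Thm~2.11]{HM15} to obtain $\CR_t U \in C^\alpha$ for each $t \in (0,T]$, with spatial norm controlled by $\VERT U \VERT_{\gamma, \eta; T}$ and the model norm, so that $\phi_n(t) \defeq \langle \CR_t U, \psi \rangle$ is a bounded scalar (noting that $F_n'$ is globally bounded for finite $n$, whereas the cubic case is handled on any fixed ball). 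The output $G_n(U)(t,x) = \phi_n(t) \psi(x) \mathbf{1}$ is then a smooth spatial function with values in the polynomial sector, which we lift canonically via the Taylor expansion of $\psi$ up to order $\lfloor \gamma \rfloor$ so that the required $\Gamma$-bounds become elementary. Lipschitz continuity in $U$ follows from the smoothness of $x \mapsto x^3$ (respectively $F_n'$) together with continuity of the reconstruction operator.

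The delicate point, which I expect to be the main obstacle, is the $\Sigma$-component of the $\CD_T^{\gamma, \eta}$ norm of $G_n(U)$, i.e.\ establishing parabolic H\"older continuity of $t \mapsto \phi_n(t)$ with the correct exponent. This requires combining the time regularity bound \eqref{e:PiTimeBound} on the model $\Pi$ with the $\Sigma$-component of the $\CD_T^{\gamma,\eta}$ norm of $U$, and transferring this information through the reconstruction $\CR_t$ at different times in order to control $\phi_n(t) - \phi_n(s)$ by the required power of $|t - s|$. Once this is in place, the Schauder estimate \cite[Thm~2.21]{HM15} contributes a factor $T^\theta$ (for some $\theta > 0$) in the $\CD_T^{\gamma,\eta}$ norm of $\CP G_n(U)$, so that combined with the contractivity of $\CP\hat{F}$ from \cite{HM15} one obtains the required contraction on short time intervals. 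The blow-up alternative then follows by a standard maximality argument, noting that restart at time $T$ requires only control of $\|\CR_T U\|_{C^\eta}$ as in \cite[Thm~3.10]{HM15}; the two listed alternatives correspond to which of \eqref{e:AbPhi} or one of the \eqref{e:TAbPsi} (parametrised by $n \in \NBar$) reaches its blow-up first under the shared model.
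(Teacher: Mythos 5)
Your proposal follows essentially the same route as the paper: cite \cite[Thm~3.10]{HM15} for the local part $\CP\hat F$, and verify that $G_n$ is a locally Lipschitz map between modelled-distribution spaces using the inhomogeneous reconstruction theorem, so that the fixed point and blow-up argument carry over. The one small deviation is that you propose to lift $\psi$ up to order $\lfloor\gamma\rfloor$ so that $G_n$ lands back in $\CD^{\gamma,\eta}$, whereas the paper targets $\CD^{\bar\gamma,\eta-\bar\gamma}$ with $\bar\gamma\ll\eta$ small and hence needs only the constant term $\r1$ (the Schauder estimate recovers the $\gamma$-regularity in either case); and the time-regularity issue you flag as the main obstacle is exactly what the bound (2.13) of \cite[Thm~2.11]{HM15} is cited to dispatch, so it is not as delicate as you fear.
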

\begin{proof}
	The result for the case of \eqref{e:Phi} follows almost exactly as in the fixed point argument of \cite{Hai14} (see Theorem~7.8 and Proposition~7.11 there) and is the content of \cite[Thm~3.10]{HM15}. Since we have a direct interpretation of the additional nonlinearity of \eqref{e:TPsi} in the polynomial part of our regularity structure, the result for that equation will follow from the same techniques with no difficulties as soon as we verify that for every $R > 0$ there exists $C$ such that for every $H, \bar{H}$ such that $\VERT H \VERT_{\eta, \gamma; T} + \VERT\bar{H} \VERT_{\eta, \gamma; T} \leq R$, $G_n$ satisfies
	\begin{equ}
		\VERT G_n(H), G_n(\bar{H}) \VERT_{\bar{\gamma}, \eta - \bar{\gamma}; T} \leq C \left( \VERT H, \bar{H} \VERT_{\gamma, \eta;T} + \VERT Z, \bar{Z} \VERT_{\gamma;T} \right)
	\end{equ}
	for all $n \in \NBar$ and $H, \bar{H}$ for some $0 < \bar{\gamma} \ll \eta$. Since $\psi \in \CB_0^r$, this is immediate from the regularity of the reconstruction operator provided by the bound (2.13) of \cite[Thm~2.11]{HM15}, combined with smoothness of $F_n'$ and $x \mapsto x^3$.
\end{proof}

\begin{remark} \label{r: classical solutions}
	If the model $Z$ in the preceding theorem is the canonical lift of a smooth driving noise as in \cite[Sec.~8.2]{Hai14} then the reconstruction operator is given by 
	\begin{equ}
		\left(\CR_t U_t\right)(x) = \left(\Pi_x^t U_t(x)\right)(x)\;.
	\end{equ} 
	In this case, the reconstruction of the abstract solutions found above coincides with the classical solutions of \eqref{e:Phi}, \eqref{e:Psi} with smooth driving noise and no renormalisation.
	
	Additionally, if we define the renormalisation map $M$ as in \cite[Sec.~9.2]{Hai14} with constants $C_1$ and $C_2$ and build a renormalised smooth model $Z^M$ as in \cite[Sec.~8.3]{Hai14}, then $u_\Phi \defeq \CR_tS_t^\Phi(\Phi_0, Z)_t$ and $u_{\Psi_n} \defeq \CR_tS_t^{\Psi_n}(\Psi_0, Z)_t$ solve the following equations with smooth driving noise (cf.~\cite[Prop.~9.10]{Hai14})
	\begin{equs}
		\partial_t u_\Phi =& \Delta u_\Phi - u_\Phi^3 + (3C_1 - 9C_2)u_\Phi + \xi,
		\\
		\partial_t u_{\Psi_n} =& \Delta u_{\Psi_n} - u_{\Psi_n}^3 + (3C_1 - 9C_2)u_{\Psi_n} + \beta F_n'(\langle u_{\Psi_n}, \psi \rangle) \psi + \xi.
	\end{equs}
	Finally, if $\xi_\varepsilon = \rho_\varepsilon \ast \xi$ where $\rho_\varepsilon$ is a mollifier at scale $\varepsilon$, then there is a choice of diverging constants $C_1^\varepsilon, C_2^\varepsilon$ such that the solutions to the above equations converge in probability as $\varepsilon \to 0$ (cf.~\cite{ChHa16}). We define the solution of \eqref{e:Phi}, \eqref{e:TPsi} to be these limits which are independent of the choice of mollifier.
\end{remark}

\begin{remark}\label{r:NoBlowUp}
	It is known that \eqref{e:Phi} has a `coming down from infinity' property that precludes the blow-up of $\CC^\alpha$ norms of solutions (cf.~\cite{MW18a}). Section~\ref{s: Cont Bounds} adapts the techniques of \cite{MW18a} to show the corresponding result for \eqref{e:TPsi}. As a result, in the cases of interest for us it follows that in the setting of Theorem~\ref{t: Fixed Point} 
	one actually has $T_* = \infty$. 
\end{remark}


\section{Discrete Inhomogeneous Models}\label{s:Dreg}


In this section we introduce the discrete analogues of the objects and the results of the last section. We will use the discretisations to identify the density of the invariant measure of \eqref{e:TPsi} with respect to \eqref{e:Phi} for $n \in \mathbb{N}$. In particular, the goal is to treat for an arbitrary fixed $n \in \mathbb{N}$ discretisations of \eqref{e:Phi}, \eqref{e:TPsi} of the form 
\begin{equs}\label{e:DPhiRenorm}
	\frac{d}{dt} \Phi^\varepsilon =& \Delta^\varepsilon \Phi^\varepsilon + C^{(\varepsilon)} \Phi^\varepsilon -\left(\Phi^\varepsilon \right)^3 + \xi^\varepsilon\;, \quad \Phi^\varepsilon(0, \cdot) = \Phi^\varepsilon_0(\cdot)\;, \tag{$\Phi^{4}_{3,\varepsilon}$}
	\\ \label{e:DPsiRenorm}
	\frac{d}{dt} \Psi^\varepsilon =& \Delta^\varepsilon \Psi^\varepsilon + C^{(\varepsilon)} \Psi^\varepsilon -\left(\Psi^\varepsilon \right)^3 + \beta F_n'(\langle \iota^\varepsilon \Psi^\varepsilon, \psi \rangle) \psi^\varepsilon + \xi^\varepsilon\;, \quad \Psi^\varepsilon(0, \cdot) = \Psi^\varepsilon_0(\cdot)\;, \tag{$\Psi^{4,n}_{3,\varepsilon}$}
\end{equs}
on the discretisation $\DTorus$ of $\Torus$ with grid scale $\varepsilon = 2^{-N}$ for $N \in \mathbb{N}$, where $\Phi^\varepsilon_0, \Psi^\varepsilon_0 \in \mathbb{R}^{\Torus_\varepsilon}$, $\Delta^\varepsilon$ is the nearest-neighbour approximation of the Laplacian $\Delta$ and $\xi^\varepsilon$ are spatial discretisations of $\xi$ defined on a single common probability space by setting
\begin{equ}[e:SimpleDNoise]
	\xi^\varepsilon(t,x) \defeq \varepsilon^{-3} \langle \xi(t, \cdot), \1_{\eBox{x}} \rangle\;, \qquad (t,x) \in \mathbb{R} \times \Torus_\varepsilon\;,
\end{equ} 
where, for $x \in \Torus_\varepsilon$, $\eBox{x} \subset \Torus$ denotes the cube of side length
$\varepsilon$ centred at $x$.

The function $\psi^\varepsilon \in \mathbb{R}^\DTorus$ is defined by 
$$\psi^\varepsilon(y) = \varepsilon^{-3} \int_\eBox{y} \psi(z) dz.$$
Finally $C_\lambda^{(\varepsilon)} \sim \varepsilon^{-1} + \log \varepsilon$ is a sequence renormalisation constants for which precise values are given in \cite[Eq.~7.6]{HM15} and 
the subsequent paragraph.

Of course, equations \eqref{e:DPhiRenorm} and \eqref{e:DPsiRenorm} are nothing but SDEs with global in time solutions. However, in order to prove the convergence to the continuum solutions, it is useful to recast their solution theory in the language of regularity structures. 
We recall the following definitions from \cite{HM15}. 

\begin{definition}
	Given $\varepsilon > 0$, a {\it discrete model} at grid-scale $\varepsilon$ for the regularity structure $\ST$ consists of maps $(\Pi^\varepsilon, \Gamma^\varepsilon, \Sigma^\varepsilon)$
	\begin{equ}
		\Pi_x^{\varepsilon, t}: \CT \to \mathbb{R}^{\DTorus}\;, \qquad \Gamma^{\varepsilon, t} : \DTorus \times \DTorus \to \CG\;, \qquad \Sigma^{\varepsilon}_x : \mathbb{R}\times \mathbb{R} \to \CG\;,
	\end{equ}
	indexed by $t \in \mathbb{R}$ and $x \in \DTorus$, which have all the algebraic properties of their continuous counterparts in Definition~\ref{d:Model}, with the spatial variables restricted to the grid $\DTorus$. Additionally, we assume that $\bigl(\Pi^{\varepsilon, t}_x \tau\bigr) (x) = 0$, for all $\tau \in \CT_l$ with $l > 0$, and all $x \in \DTorus$ and $t \in \mathbb{R}$.
\end{definition}

The seminorms $\| \Pi^\varepsilon \|^{(\varepsilon)}_{\gamma; T}$ and $\| \Gamma^\varepsilon \|_{\gamma;T}^{(\varepsilon)}$ are defined to be the smallest constants such that the inequalities \eqref{e:PiGammaBound} hold uniformly in $\lambda \in [\varepsilon,1]$, $x, y \in \DTorus$, $t \in \mathbb{R}$ and with the usual duality pairing of $\CD'(\mathbb{R} \times \Torus) \times \CD(\mathbb{R} \times \Torus)$ replaced with the discrete pairing 
$$\langle F, \varphi \rangle_\varepsilon \defeq \int_\mathbb{R} \sum_{y \in \DTorus} F(t,y) \varphi(t,y) dt.$$
The quantity $\| \Sigma^\varepsilon \|_{\gamma;T}^{(\varepsilon)}$ is then defined as the smallest constant $C$ such that the bounds
\begin{equ}[e:DSigmaBound]
	\| \Sigma^{\varepsilon, s t}_{x} \tau \|_{m} \leq C \| \tau \| \bigl(|t - s|^{1/\s_0} \vee \varepsilon\bigr)^{l - m}\;,
\end{equ}
hold uniformly in $x \in \DTorus$ and the other parameters as in \eqref{e:SigmaBound}.

We measure the time regularity of $\Pi^\varepsilon$ as in \eqref{e:PiTimeBound}, by substituting the continuous objects by their discrete analogues, and by using $|t - s|^{1/\s_0} \vee \varepsilon$ instead of $|t - s|^{1/\s_0}$ on the right-hand side. We also define quantities $\| \cdot \|^{(\varepsilon)}$, $\VERT \cdot \VERT^{(\varepsilon)}$, in the same way as the above construction that measure the size of (resp.\ distance between) model(s) $Z$ (resp.\ $Z, \bar{Z}$).

\begin{remark}
	The pairing $\langle \cdot, \cdot \rangle_\varepsilon$ does not correspond to the embedding $\iota^\varepsilon$. Indeed, it does not correspond to any embedding $e: \mathbb{R}^\DTorus \to C^{-\frac12 -}$ since the action in space is that of a Dirac delta which has regularity no better than $-3$. This is not a serious issue for us since the difference between the two ways of testing applied with the solutions of \eqref{e:DPhiRenorm}, \eqref{e:DPsiRenorm} converges to $0$ as $\varepsilon \to 0$.
\end{remark}

One then has the following discrete analogue of the $\CD_T^{\gamma, \eta}$ spaces. For $\gamma, \eta \in \mathbb{R}$, a fixed time $T > 0$ and a discrete model $Z^\varepsilon=(\Pi^\varepsilon, \Gamma^\varepsilon, \Sigma^\varepsilon)$ on a regularity structure $\ST$, we define the $\CD_{T,\varepsilon}^{\gamma, \eta}$ norms $\| H^\varepsilon \|^{(\varepsilon)}_{\gamma, \eta; T}$ and
$\VERT H^\varepsilon \VERT^{(\varepsilon)}_{\gamma, \eta; T}$ of a function $H^\varepsilon : (0, T] \times \DTorus \to \CT_{<\gamma}$ in exactly the same way as in 
\eqref{e:ModelledDistributionNormSpace} and \eqref{e:ModelledDistributionNorm}, except that the spatial variables run over $\DTorus$ and
the powers of $|t|_0$ and $|t,s|_0$ appearing there are replaced by
 $\onorm{t} \vee \varepsilon$ and $|t,s|_0 \vee \varepsilon$ respectively.

\begin{definition}
	$\CD_{T, \varepsilon}^{\gamma, \eta}$ is the space of functions $H^\varepsilon : (0, T] \times \DTorus \to \CT_{<\gamma}$ such that $\VERT H^\varepsilon \VERT_{\gamma, \eta;T}^{(\varepsilon)} < \infty$.
\end{definition}

\begin{remark}
	In the setting of discrete inhomogeneous models, suitable instances of the usual results in the theory of regularity structures hold. For example, one has a reconstruction operator with the explicit representation $(\CR_t^\varepsilon H_t^\varepsilon)(x) \defeq (\Pi_x^{\varepsilon,t}H_t^\varepsilon(x))(x)$ \cite[Thm~4.6]{HM15}. Additionally, the Green's function for the discretised heat equation has a decomposition that is a suitable analogue of the decomposition of the heat kernel given in \cite{Hai14} (see \cite[Lem.~5.4]{HM15}) and the corresponding lift of the action of the kernel satisfies analogues of the usual Schauder estimates, \cite[Thm~4.17]{HM15}.
\end{remark}

We now obtain the solutions of \eqref{e:DPhiRenorm}, \eqref{e:DPsiRenorm} from an abstract fixed point argument. We will eventually handle the renormalisation terms containing factors $C_\lambda^{(\varepsilon)}$ at the level of our choice of model so that the abstract formulation of these equations will be given by 
\begin{equs}
	\label{e:AbDPhi} U^\varepsilon =& \CP^\varepsilon \hat{F}(U^\varepsilon) + S^\varepsilon \Phi_0^\varepsilon + I_0 \\
	\label{e:AbDPsi} U_n^\varepsilon =& \CP^\varepsilon (\hat{F}(U_n^\varepsilon) + \beta G_n^\varepsilon(U_n^\varepsilon)) + S^\varepsilon \Phi_0^\varepsilon + I_0
\end{equs}
where $S^\varepsilon \Phi_0^\varepsilon$ is the solution to the semidiscrete heat equation with initial condition $\Phi_0^\varepsilon \in \mathbb{R}^\DTorus$ and $\CP^\varepsilon = \CK_{\bar{\gamma}}^\varepsilon + R_\gamma^\varepsilon \CR^\varepsilon$ is the abstract analogue of the action of the semidiscrete heat kernel (see equations $(4.24)$ and $(5.17)$ of \cite{HM15} for the definitions of $R_\gamma^\varepsilon$ and $\CK_{\bar{\gamma}}^\varepsilon$). Finally
\begin{align*}
G_n^\varepsilon(U^\varepsilon)(t,x) \defeq \begin{cases}
F_n'(\langle \iota^\varepsilon \CR_t^\varepsilon U^\varepsilon , \psi \rangle) \psi^\varepsilon(x), & n \in \mathbb{N} \\
\langle \iota^\varepsilon \CR_t^\varepsilon U^\varepsilon , \psi \rangle^3 \psi^\varepsilon(x), & n = \infty
\end{cases}
\end{align*}
One then has the following analogue of Theorem~\ref{t: Fixed Point}, which is essentially special case of \cite[Thm~5.8]{HM15} up to the minor adaptation required to accommodate the nonlinearity $G_n^\varepsilon$ which is similar to that performed in Section~\ref{s:reg} and so we omit the details.

\begin{theorem}\label{t:D Fixed Point}
	Let $Z^\varepsilon$ be a sequence of discrete inhomogeneous models indexed by $\varepsilon = 2^{-N}$ for $N \geq 1$. Then for every $T< \infty$, the sequence of solution maps $\mathcal{S}^\varepsilon_T : (\Phi_0^\varepsilon, Z^\varepsilon) \mapsto U^\varepsilon$ of the equation \eqref{e:AbDPhi} up to time $T$ is jointly Lipschitz continuous (uniformly in $\varepsilon$) in the sense of Theorem~\ref{t: Fixed Point}, but replacing the continuum objects with their discrete analogues. The same is true of the solution map for \eqref{e:AbDPsi}.
\end{theorem}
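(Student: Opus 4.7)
The plan is to adapt the proof of the continuum statement Theorem~\ref{t: Fixed Point} to the discrete setting by invoking the discrete analogues of the reconstruction theorem, Schauder estimates, and multiplicative bounds established in \cite[Sec.~4--5]{HM15}. As in the continuum case, the argument for \eqref{e:AbDPhi} is already covered by \cite[Thm~5.8]{HM15}, so the task reduces to handling the extra non-local nonlinearity $G_n^\varepsilon$ appearing in \eqref{e:AbDPsi}, uniformly in the grid scale $\varepsilon = 2^{-N}$.

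First I would set up the Banach fixed point problem. For fixed $T > 0$ and a ball in $\CD_{T,\varepsilon}^{\gamma,\eta}$, observe that the map
\begin{equs}
U^\varepsilon \mapsto \CP^\varepsilon\bigl(\hat{F}(U^\varepsilon) + \beta G_n^\varepsilon(U^\varepsilon)\bigr) + S^\varepsilon\Phi_0^\varepsilon + I_0
\end{equs}
is built from three pieces. The discrete heat-kernel lift $\CP^\varepsilon$ satisfies Schauder estimates uniform in $\varepsilon$ by \cite[Thm~4.17]{HM15}, gaining two degrees of regularity; the abstract polynomial term $\hat{F}(U^\varepsilon)$ maps $\CT_\CU \cap \hat{\CT}$ into $\hat{\CT}$ and is locally Lipschitz in $U^\varepsilon$ exactly as in the continuum case. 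So, mirroring the argument in Theorem~\ref{t: Fixed Point}, it suffices to show that for any $R > 0$ there exists $C$ such that whenever $\VERT H^\varepsilon \VERT^{(\varepsilon)}_{\gamma,\eta;T} + \VERT \bar H^\varepsilon \VERT^{(\varepsilon)}_{\gamma,\eta;T} \leq R$, one has the bound
\begin{equs}
\VERT G_n^\varepsilon(H^\varepsilon); G_n^\varepsilon(\bar H^\varepsilon) \VERT^{(\varepsilon)}_{\bar\gamma,\eta-\bar\gamma;T} \leq C\bigl(\VERT H^\varepsilon; \bar H^\varepsilon \VERT^{(\varepsilon)}_{\gamma,\eta;T} + \VERT Z^\varepsilon; \bar Z^\varepsilon \VERT^{(\varepsilon)}_{\gamma;T}\bigr)
\end{equs}
for every $n \in \NBar$, where $0 < \bar\gamma \ll \eta$.

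The key observation for this bound is that $G_n^\varepsilon$ is valued in the polynomial sector, and its only dependence on $U^\varepsilon$ is through the scalar quantity $\langle \iota^\varepsilon \CR_t^\varepsilon U^\varepsilon, \psi \rangle$. The discrete reconstruction theorem \cite[Thm~4.6]{HM15} provides the uniform estimate $|(\CR_t^\varepsilon U^\varepsilon)(x)| \lesssim \onorm{t}^\eta \VERT U^\varepsilon \VERT^{(\varepsilon)}_{\gamma,\eta;T}\,\|Z^\varepsilon\|^{(\varepsilon)}$ (up to the usual weight in $\onorm{t}$), together with the corresponding Lipschitz bound jointly in $U^\varepsilon$ and $Z^\varepsilon$. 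Pairing with a smooth test function $\psi$ via the piecewise-constant embedding $\iota^\varepsilon$ preserves these bounds uniformly in $\varepsilon$, since $\psi \in \CB_0^r$. Combining this with the smoothness of $F_n'$ (bounded derivative by $n^3$, with bounded higher derivatives after the fixed truncation) for $n < \infty$ and the cubic polynomial for $n = \infty$, and with the uniform bound $\|\psi^\varepsilon\|_\infty \leq \|\psi\|_\infty$, gives the desired Lipschitz estimate on $G_n^\varepsilon$ at the level of the polynomial sector.

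With that bound in hand, the standard contraction argument closes: for $T$ small enough the map is a contraction on a fixed ball in $\CD_{T,\varepsilon}^{\gamma,\eta}$ uniformly in $\varepsilon$, yielding a unique solution and joint Lipschitz continuity of $\CS_T^\varepsilon$ in $(\Phi_0^\varepsilon, Z^\varepsilon)$. Global existence in $T$ then follows as in \cite[Thm~5.8]{HM15} since the nonlinearity $G_n^\varepsilon$ does not introduce any new divergence — it is controlled by the scalar $\langle \iota^\varepsilon \CR_t^\varepsilon U^\varepsilon, \psi \rangle$, which for SDEs \eqref{e:DPsiRenorm} is of course finite for all times. The only genuine subtlety, which is where I expect attention is needed, is verifying that the non-local testing $\langle \iota^\varepsilon \cdot, \psi \rangle$ produces $\varepsilon$-uniform estimates on $G_n^\varepsilon$; but since testing against a smooth compactly supported function commutes well with the piecewise-constant embedding, this is no worse than the analogous continuum bound used in Theorem~\ref{t: Fixed Point}.
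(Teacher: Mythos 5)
Your proposal matches the paper's (essentially unwritten) proof: the paper dispatches this theorem as a special case of \cite[Thm~5.8]{HM15} together with the same minor adaptation for $G_n^\varepsilon$ that was carried out in Section~\ref{s:reg} for the continuum $G_n$, and your write-up is exactly that adaptation spelled out. One small inaccuracy worth correcting: the discrete reconstruction $\CR_t^\varepsilon U^\varepsilon$ has negative regularity, so the pointwise bound $|(\CR_t^\varepsilon U^\varepsilon)(x)| \lesssim \onorm{t}^{\eta}\VERT U^\varepsilon\VERT^{(\varepsilon)}_{\gamma,\eta;T}\|Z^\varepsilon\|^{(\varepsilon)}$ you quote is \emph{not} uniform in $\varepsilon$ (one expects a blow-up of order $\varepsilon^{\alpha}$, as used later in the proof of Theorem~\ref{t:Convergence}); the bound that is uniform, and the one you actually need, is the distributional estimate obtained by pairing against the fixed smooth test function $\psi$ at unit scale, which is precisely what controls $\langle \iota^\varepsilon \CR_t^\varepsilon U^\varepsilon, \psi\rangle$ and makes the rest of your argument go through.
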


In order to state our direct analogue of the main convergence result of \cite{HM15} we introduce a choice of discretisation of initial condition $\phi \in C^\alpha(\Torus)$. We recall that we defined
$$\FP_\varepsilon \phi \defeq \langle \phi, \varepsilon^{-3} 1_{\{\|\cdot - x\|_\infty \leq \frac{\varepsilon}{2}\}} \rangle \in L^\infty(\DTorus).$$
The right hand side of this expression is well defined for $\phi \in C^{-1+\kappa}$ since the indicator function of a cube lies in the Besov space $\CB_{1,1}^{1-\kappa}$. One can then show (essentially by calculation; see Corollary \ref{c:init discretisation}) that for $\zeta \in C^{\alpha'}$, and $\zeta^\varepsilon \defeq \FP_\varepsilon \zeta$, $$\| \zeta ; \zeta^\varepsilon \|_{- \frac12 - \kappa}^{(\varepsilon)} \defeq \sup_{ x \in \DTorus} \sup_{\lambda \in (\varepsilon, 1]} \sup_{\psi \in \CB_r^0} \lambda^{\frac12 + \kappa} | \scal{\zeta - \iota^\varepsilon \zeta^\varepsilon, \psi_x^\lambda}| \to 0$$
as $\varepsilon \to 0$.

\begin{theorem}\label{t:Convergence}
	Let $\xi$ be a space-time white noise over $L^2(\mathbb{R} \times \Torus)$ on a probability space $(\Omega, \SF, \mathbf{P})$ and let $\Phi$ and $\Psi$ be the unique maximal solution of \eqref{e:Phi} and \eqref{e:TPsi} respectively with initial condition $\phi \in C^{\alpha}(\Torus)$. Let $\xi^\varepsilon$ be given by \eqref{e:SimpleDNoise}, and let $\Phi^\varepsilon$ and $\Psi^\varepsilon$ be the unique global solution of \eqref{e:DPhiRenorm} and \eqref{e:DPsiRenorm} respectively with initial condition $\FP_\varepsilon \phi$.
	Then there exists a sequence of stopping times $T_\varepsilon$ such that $\mathbb{P}(T_\varepsilon < T) \to 0$ as $\varepsilon \to 0$ for any fixed $T$ positive and
	\begin{equ}
		\sup_{t \in [0,T_\varepsilon]} \|\Phi(t) - \iota^\varepsilon \Phi^\varepsilon(t)\|_{C^{-\frac12 -}} \to 0\;,\qquad
		\sup_{t \in [0,T_\varepsilon]} \|\Psi(t) - \iota^\varepsilon \Psi^\varepsilon(t)\|_{C^{-\frac12 -}} \to 0\;,
	\end{equ}
	in probability as $\varepsilon \to 0$. Furthermore the above convergence is locally uniform in the initial condition $\phi$.
\end{theorem}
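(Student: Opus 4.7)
My plan is to reduce the statement to three ingredients: (i) convergence of the discrete BPHZ model $Z^\varepsilon$ to the continuum BPHZ model $Z$ in the discrete norm $\VERT \cdot ; \cdot \VERT^{(\varepsilon)}_{\eta,\gamma;T}$, (ii) convergence of the discretised initial condition $\FP_\varepsilon \phi$ to $\phi$ in the appropriate discrete Hölder norm, and (iii) uniform (in $\varepsilon$) Lipschitz continuity of the discrete solution maps as already provided by Theorem~\ref{t:D Fixed Point}, together with the analogous continuous statement in Theorem~\ref{t: Fixed Point}. Combining these three ingredients with the fact that the reconstruction operator is continuous as a function of the model and the modelled distribution then yields the desired statement at the abstract level; the stopping time $T_\varepsilon$ is introduced solely to circumvent the fact that both Lipschitz estimates are only local in the size of the solution.

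For (i), since our truncated regularity structure $\hat{\ST}$ is the same as the one used in \cite{HM15} (the new non-local term is lifted directly into the polynomial sector and creates no new symbols), the convergence in probability $\VERT Z ; Z^\varepsilon \VERT^{(\varepsilon)}_{\eta,\gamma;T} \to 0$ is exactly the content of the model-convergence result of \cite{HM15}, using the precise renormalisation constants $C^{(\varepsilon)}$ indicated there. For (ii), the bound $\|\zeta ; \FP_\varepsilon \zeta\|^{(\varepsilon)}_{-\frac12-\kappa} \to 0$ for $\zeta \in C^{\alpha'}$ is noted just before the theorem and follows from a direct computation estimating the action of $\varphi^\lambda_x - \iota^\varepsilon \FP_\varepsilon \varphi^\lambda_x$ against $\zeta$; a mild refinement, using boundedness of $\iota^\varepsilon \FP_\varepsilon$ on $C^\alpha$ combined with density of $C^{\alpha'}$, yields locally uniform convergence over $\phi \in C^\alpha$.

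With (i), (ii) in hand, I would define the stopping time $T_\varepsilon$ as the first time $t \leq T$ at which either $\VERT U \VERT_{\gamma,\eta;t}$, $\VERT U^\varepsilon \VERT^{(\varepsilon)}_{\gamma,\eta;t}$, $\VERT Z \VERT_{\eta,\gamma;t}$ or $\VERT Z^\varepsilon \VERT^{(\varepsilon)}_{\eta,\gamma;t}$ exceeds some large threshold $B$, and argue that $\mathbb{P}(T_\varepsilon < T) \to 0$ using (i), the continuum a priori bounds (Remark~\ref{r:NoBlowUp}, which rules out blow-up) and the Lipschitz continuity of the solution maps. On the complementary event, the uniform Lipschitz estimate of Theorem~\ref{t:D Fixed Point} together with its continuum counterpart gives
\begin{equs}
\VERT U ; U^\varepsilon \VERT^{(\varepsilon)}_{\gamma,\eta;T_\varepsilon} \lesssim_B \|\phi ; \FP_\varepsilon \phi\|^{(\varepsilon)}_\alpha + \VERT Z ; Z^\varepsilon \VERT^{(\varepsilon)}_{\eta,\gamma;T_\varepsilon}\;,
\end{equs}
which tends to zero in probability. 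Applying the reconstruction operators $\CR$ and $\CR^\varepsilon$ and using that $\|\CR U - \iota^\varepsilon \CR^\varepsilon U^\varepsilon\|_{C^\alpha}$ is controlled by the right-hand side above (via the discrete analogue of the reconstruction theorem combined with a commutation estimate between $\iota^\varepsilon$ and the discrete pairing) yields the announced convergence.

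The main obstacle I anticipate is not the analysis of the stochastic objects, which is already packaged in the model-convergence result of \cite{HM15}, but rather the verification that the new nonlinearity $G_n^\varepsilon$ defined on the discrete side is compatible with its continuum counterpart $G_n$ in the sense needed for the fixed-point argument: concretely, one needs $\langle \iota^\varepsilon \CR_t^\varepsilon U^\varepsilon, \psi \rangle \to \langle \CR_t U, \psi \rangle$ uniformly on $[0,T_\varepsilon]$, and a quantitative control of $G_n^\varepsilon(U^\varepsilon) - G_n(U)$ in the discrete $\CD^{\bar\gamma,\eta-\bar\gamma}_{T,\varepsilon}$ norm in terms of $\VERT U ; U^\varepsilon \VERT^{(\varepsilon)}_{\gamma,\eta;T}$ and $\VERT Z ; Z^\varepsilon \VERT^{(\varepsilon)}_{\eta,\gamma;T}$. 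This last estimate propagates through the fixed-point iteration and is the analogue, in the inhomogeneous discrete setting, of the continuity estimate for $G_n$ used in the proof of Theorem~\ref{t: Fixed Point}; since $\psi$ is a fixed smooth compactly supported test function and $F_n'$ is smooth, it should follow from the uniform bound \cite[Thm~4.6]{HM15} on $\CR^\varepsilon$ together with boundedness of $\iota^\varepsilon$ as a map $L^\infty(\DTorus) \to L^\infty(\Torus)$, but writing this out carefully is where the bulk of the technical work lies.
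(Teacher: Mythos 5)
Your outline follows the same route as the paper: both invoke the model-convergence machinery of \cite{HM15}, the Lipschitz continuity of the continuum and discrete solution maps (Theorems~\ref{t: Fixed Point} and~\ref{t:D Fixed Point}), a stopping-time truncation, and the reconstruction theorems. However, there is a genuine gap in the step you dismiss as ``a commutation estimate between $\iota^\varepsilon$ and the discrete pairing.'' The discrete reconstruction theorem and the $\VERT U;U^\varepsilon\VERT^{(\varepsilon)}$ bound control the quantity $\|\Phi(t);\Phi^\varepsilon(t)\|_{C^\alpha}^{(\varepsilon)}$, which is defined via the discrete pairing $\langle\cdot,\cdot\rangle_\varepsilon$; the theorem's conclusion instead involves $\iota^\varepsilon\Phi^\varepsilon$ tested against $\varphi^\lambda_x$ in the usual way. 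Expanding the difference gives
\begin{equ}
\langle \iota^\varepsilon \Phi^\varepsilon,\varphi^\lambda_x\rangle - \langle\Phi^\varepsilon,\varphi^\lambda_x\rangle_\varepsilon
= \sum_{y\in\DTorus}\Phi^\varepsilon(y)\int_{\square^\varepsilon_y}\bigl(\varphi^\lambda_x(z)-\varphi^\lambda_x(y)\bigr)\,dz
\lesssim \lambda^{-1}\varepsilon\,\|\Phi^\varepsilon\|_\infty\;,
\end{equ}
so that extracting the required $\lambda^\alpha$-decay forces you to show $\|\Phi^\varepsilon\|_\infty\lesssim\varepsilon^\alpha$. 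This is not a formality: since $\Phi^\varepsilon$ approximates a genuine distribution, its sup norm diverges as $\varepsilon\to 0$ and you must control the \emph{rate}. The paper does this by decomposing $\Phi^\varepsilon = u^\varepsilon + v^\varepsilon$ into a discretised stochastic-heat part $u^\varepsilon$ (whose $\varepsilon^\alpha$ blow-up rate is explicit and Gaussian) plus a remainder $v^\varepsilon$ that is shown to be uniformly bounded in $L^\infty$ by comparison with the continuum remainder; this requires introducing a \emph{second} stopping time $T^2_\varepsilon$, and the final $T_\varepsilon$ is $T^1_\varepsilon\wedge T^2_\varepsilon$. Your single stopping time (chosen to keep the abstract modelled-distribution and model norms below $B$) does not by itself yield the required $L^\infty$ control on $\Phi^\varepsilon$, so this piece of the argument is missing.

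On a smaller note, the obstacle you flag as ``the bulk of the technical work'' --- compatibility of $G_n^\varepsilon$ with $G_n$ --- is comparatively mild and is dispatched by the observation that both abstract equations are formulated on the \emph{same} regularity structure and driven by the \emph{same} sequence of models, so the Lipschitz estimate for the discrete solution map already carries the non-local term along for free once the Lipschitz bound on $G_n^\varepsilon$ in the $\CD^{\bar\gamma,\eta-\bar\gamma}_{T,\varepsilon}$ norm is verified (which is straightforward since $\psi$ is smooth and $F_n'$ is bounded and Lipschitz). So the relative weight you assign to the two difficulties is inverted compared to where the real work lies.
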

\begin{proof}
	This follows from the same techniques as the proof of \cite[Thm~1.1]{HM15} for the case of \eqref{e:Phi}. This proof proceeds by using Theorem~\ref{t: Fixed Point} and Theorem~\ref{t:D Fixed Point} along with convergence of suitable Gaussian models to reduce to the convergence in the case of a smooth driving noise which is a problem of numerical analysis. 
	
	We note that the same techniques work here since we have formulated the abstract version of \eqref{e:TPsi} and its discrete counterpart on the same regularity structure as those for \eqref{e:Phi} such that the equations are simultaneously driven by the same choice of model. Hence following the proof of \cite[Thm~1.1]{HM15} yields the convergence
	\begin{equs}
		\sup_{t \in [0,T_\varepsilon^1]}\left( \|\Phi(t); \Phi^\varepsilon(t)\|_{C^{-\frac12-}}^{(\varepsilon)} \vee \|\Psi(t); \Psi^\varepsilon(t)\|_{C^{-\frac12-}}^{(\varepsilon)} \right) \to 0\;,
	\end{equs}
	in probability as $\varepsilon \to 0$, where $\|\zeta; \zeta^\varepsilon\|_{C^{\alpha}}^{(\varepsilon)} \defeq \sup_{\varphi \in \CB_0^r} \sup_{x \in \DTorus} \sup_{\lambda \in [\varepsilon, 1]} \lambda^{\alpha} |\langle \zeta, \varphi_x^\lambda \rangle - \langle \zeta^\varepsilon, \varphi_x^\lambda \rangle_\varepsilon|$ and $T_\varepsilon^1$ is a suitable subsequence of $H_K \vee \tilde{H}_K$ where $H_K$ (resp. $\tilde{H}_K$) is the exit time of the ball of radius $K$ for $\Phi$ (resp. $\Psi$).
	
	Unfortunately, as mentioned earlier, the discrete testing appearing here is the wrong one. Since the behaviour below scale $\varepsilon$ is straightforward, it remains to see that 
	$$\sup_{\varphi \in \CB_0^r} \sup_{x \in \DTorus} \sup_{\lambda \in [\varepsilon, 1]} \lambda^\alpha |\langle \iota^\varepsilon \Phi^\varepsilon, \varphi_x^\lambda \rangle - \langle \Phi^\varepsilon, \varphi_x^\lambda \rangle_\varepsilon| \to 0$$
	and similarly for $\Psi^\varepsilon$. Notice that we can write
	\begin{equ}
		\langle \iota^\varepsilon \Phi^\varepsilon, \varphi_x^\lambda \rangle - \langle \Phi^\varepsilon, \varphi_x^\lambda \rangle_\varepsilon = \sum_{y \in \DTorus} \Phi^\varepsilon(y) \int_\eBox{y} \left( \varphi_x^\lambda(z) - \varphi_x^\lambda(y)\right)dz \les \lambda^{-1} \varepsilon \|\Phi^\varepsilon\|_\infty
	\end{equ}
	since $|\varphi_x^\lambda(y) - \varphi_x^\lambda(z)| \leq \lambda^{-4} |z - y|$ and the summand is non-zero only for $y$ such that $|y-x| \leq 2\lambda$. Hence, it suffices to see that $\|\Phi^\varepsilon\|_\infty \les \varepsilon^{\alpha}$.
	
	For this we only sketch the details, since they are an application of the same discrete tools as used repeatedly in this paper and in \cite{HM15}. Indeed, this is a corollary of the same rate of blow up for discretisations of the stochastic heat equation since as usual for \eqref{e:Phi}, $\Phi = u + v$ where $u$ is the solution of the stochastic heat equation and $v \in C^{\frac12 - \kappa}$ is the solution of a `remainder equation'. One has a similar decomposition for $\Phi^\varepsilon$ and the techniques used above yield that 
	$$ \sup_{t \in [0,T_\varepsilon^2]} \|v^\varepsilon(t) - v(t)\|_\infty \to 0$$ for some sequence of stopping times $T_\varepsilon^2$ satisfying $\mathbb{P}(T_\varepsilon^2 < T) \to 0$ as $\varepsilon \to 0$ for every fixed $T > 0$. 
	In particular, taking $T_\varepsilon = T_\varepsilon^1 \wedge T_\varepsilon^2$ yields the desired result since then the above yields control on the supremum norm of $v^\varepsilon$ so that the only blow-up in $\Phi^\varepsilon$ comes from $u$.
\end{proof}

\section{Bounds for the Continuum $\Psi_3^{4,n}$ equation}\label{s: Cont Bounds}

In this short section, we state an a priori bound which is uniform in $n$ and $\eps$ for the PDE with smooth driving noise
\begin{align}\label{e:smooth Psi renorm}
(\partial_t - \Delta)u = -u^3 +(3C_1^{(\eps)} - 9C_2^{(\eps)})u + \beta \psi F_n'(\langle u, \psi \rangle) + \xi_\eps
\end{align}
that is a direct adaptation of the main result of \cite{MW18a} which give the equivalent bound for the solution of \eqref{e:Phi}. (Take for example $\xi_\eps = \rho_\eps \star \xi$ for $\rho_\eps$
a smooth compactly supported mollifier.) The constants $C_i^{(\eps)}$ are the 
same renormalisation constants as in the proof of \cite[Thm~10.22]{Hai14}; in particular they do not depend
on the additional nonlinearity appearing on the right-hand side.
This kind of bound is of interest to us since the terms appearing on the right-hand side of our a priori bound will converge to natural limiting objects as $\varepsilon \to 0$ so that these bounds will directly transfer to the solution of \eqref{e:TPsi}.

Since we are able to restart the equation, it is sufficient for us to obtain good bounds up to time $1$. Hence we define the cylinder $P \defeq (0,1) \times \Torus$ and for $R > 0$ we set $P_R = (R^2,1) \times \Torus$. The bounds we obtain will also depend on $R$ in an explicit way which will enable the bounds to be independent of the choice of initial condition.

We draw attention here to the fact that our choice of spatial domain and the resulting definition of $P_R$ differ slightly to the ones given in \cite{MW18a}. This is necessary since our additional non-linearity is non-local in space and so to adapt their argument we must work in a setting where localisation in space is not included in the proof. 

In particular, the key a priori bound on which the proof technique of \cite{MW18a} is premised (their Lemma 2.7) must be slightly adapted to accommodate this shift in setting. A trivial adaptation of their proof of this Lemma yields the following result.

\begin{lemma}\label{l: maximum estimate}
	Let $u$ be a continuous function defined on $[0,1] \times \mathbb{T}^3$ such that 
	$$(\partial_t - \Delta)u(z) = - u(z)^3 + g(z,u)$$
	pointwise for $z \in (0,1] \times \mathbb{T}^3$ for a bounded function $g$. Then for $z = (t,x) \in (0,1] \times \mathbb{T}^3$, one has that
	$$|u(z)| \leq C \max\{t^{-\frac12}, \|g\|^\frac13\}$$ for some independent constant $C$.
\end{lemma}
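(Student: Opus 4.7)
The plan is to prove the bound by parabolic comparison with an explicit supersolution on $[0,1] \times \Torus$. Set $M = \|g\|_\infty$. By the symmetry $u \mapsto -u$, which turns the equation into $(\partial_t - \Delta)(-u) = -(-u)^3 - g(z,u)$ with a right-hand side still bounded by $M$, it is enough to establish the one-sided bound $u(t,x) \leq A t^{-1/2} + M^{1/3}$ for some universal $A$; the full bound on $|u|$ then follows from the analogous bound on $-u$.

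Fix $A \geq 1$ and introduce the smooth barrier $\phi(t) = A t^{-1/2} + M^{1/3}$. Then $\phi'(t) = -\tfrac{A}{2} t^{-3/2}$, while the elementary inequality $(a+b)^3 \geq a^3 + b^3$ for $a,b \geq 0$ (applied with $a = A t^{-1/2}$ and $b = M^{1/3}$) gives $\phi(t)^3 \geq A^3 t^{-3/2} + M$. Hence, for $A \geq 1$,
\[
\phi'(t) + \phi(t)^3 \geq \bigl(A^3 - \tfrac{A}{2}\bigr) t^{-3/2} + M > M\;,
\]
on $(0,1]$, so $\phi$ is a strict supersolution of the spatially constant comparison equation $\partial_t v - \Delta v + v^3 = M$.

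Now consider $w = u - \phi$ on $[0,1] \times \Torus$. Since $u$ is continuous on the compact set $[0,1] \times \Torus$ it is bounded, whereas $\phi(t) \to \infty$ as $t \to 0^+$, so $w \to -\infty$ uniformly in $x$. Hence either $\sup w \leq 0$ (which already gives the desired bound) or the supremum is attained at some point $(t_0,x_0)$ with $t_0 \in (0,1]$ and $x_0 \in \Torus$; no spatial boundary analysis is needed since $\Torus$ is closed. At such a maximum one has $\Delta u(t_0,x_0) \leq 0$, $\partial_t u(t_0,x_0) \geq \phi'(t_0)$ (the case $t_0 = 1$ giving $\partial_t u(1,x_0) \geq 0 \geq \phi'(1)$), and $u(t_0,x_0) > \phi(t_0) > 0$, so $u(t_0,x_0)^3 > \phi(t_0)^3$. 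Substituting into the PDE,
\[
\phi'(t_0) \leq \partial_t u(t_0,x_0) - \Delta u(t_0,x_0) = -u(t_0,x_0)^3 + g(t_0,x_0,u) < -\phi(t_0)^3 + M\;,
\]
contradicting the supersolution inequality. Thus $u \leq \phi$ throughout, and combining with the matching bound on $-u$ yields $|u| \leq \phi \leq 2 \max\{A t^{-1/2}, M^{1/3}\}$, i.e., the claim with $C = 2A$.

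The argument is essentially a textbook parabolic maximum principle; the only potentially delicate point is the behaviour near $t = 0$, but since $u$ is continuous on the closed cylinder and the barrier diverges as $t \to 0^+$, no initial-data matching is required. Relative to the analogous \cite[Lem.~2.7]{MW18a}, the simplification comes from the absence of spatial cut-offs: on the torus no spatial boundary terms appear, so a spatially constant supersolution suffices, and this is where the "trivial adaptation" referred to by the authors is carried out.
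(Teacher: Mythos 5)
Your proof is correct and follows exactly the parabolic comparison/maximum-principle strategy the paper has in mind when it calls this a ``trivial adaptation'' of \cite[Lemma 2.7]{MW18a}: on the torus there is no spatial boundary, so a spatially constant barrier $\phi(t) = A t^{-1/2} + M^{1/3}$ suffices, whereas their version needs a barrier built from the parabolic distance to the boundary of a cylinder. One small slip: in the parenthetical covering $t_0 = 1$ you write $\partial_t u(1,x_0) \geq 0$, but the maximality of $w = u - \phi$ only gives $\partial_t w(1,x_0) \geq 0$, i.e.\ $\partial_t u(1,x_0) \geq \phi'(1)$; since this is all that is used in the subsequent chain of inequalities, the argument is unaffected.
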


If $\<1>$ is a solution of the equation $(\partial_t - \Delta) \<1> = \xi_\eps$ then the techniques of \cite{MW18a} will in fact yield supremum norm bounds on $v \defeq u - \<1>$ where $u$ is the solution of \eqref{e:smooth Psi renorm}.

First, we introduce the definitions of some graphical notation appearing in \cite{MW18a}. We emphasise that the trees 
appearing in this section are coloured black since they are not elements of a regularity structure. They also slightly 
differ from the BPHZ model applied to those elements since they are constructed directly from the PDE, rather than 
via convolution with some cut-off version of the heat kernel.

We define $\<2> \defeq \<1>^2 - C_1^{(\eps)}$ and $\<3> \defeq \<1>^3 - 3C_1^{(\eps)} \<1>$, leaving the $\eps$-dependence 
implicit. We then introduce the higher order symbols $\<20>, \<30>$ which are assumed to satisfy 
$$(\partial_t - \Delta) \<20> = \<2>, \; \quad (\partial_t - \Delta) \<30> = \<3>.$$

For $\alpha > 0$, we let $[\cdot]_{\alpha, C}$ be the usual $\alpha$-Hölder seminorm restricted to points in the set $C \subseteq \mathbb{R} \times \Torus$. If $C$ is omitted, it is to be understood that it is the whole space. 

To define an analogue of these seminorms for $\alpha < 0$, we fix a family of smooth compactly supported test functions $\phi_T\colon \R\times \R^{3}\to \R$ with a semigroup property at dyadic scales as constructed in \cite[Sec.~2]{MW18a}. The precise form of $\phi_T$ won't matter to us except that it is required to prove the analogue of the reconstruction theorem \cite[Thm~2.8]{MW18a} used in the paper of Moinat and Weber which is implicitly also required here. Since we do not retrace many details of the proofs of \cite{MW18a} in this section, we refer the interested reader to that paper for more details.

Having introduced this quantity, we now introduce a finite collection 
\begin{equ}
\fT = \{\<1>,\<2>,\<2>_x,\<20>,\<3>,\<30>, \<22>, \<31>,\<32>\}\;,
\end{equ}
of higher order trees. Each of these trees represents
an $\eps$-dependent random function (it actually depends furthermore on a space-time ``base point''
since we consider the ``positively renormalised'' quantities). Details of their construction
do not matter for the purpose of this discussion, but we introduce the quantity $n_\tau$
counting the number of leaves of a tree $\tau$ (so $n_{\<2>} = 2$, $n_{\<22>} = 4$, etc)
as well as a (random) quantity $[\tau]_\kappa$ measuring the size of these functions in 
a terms of how their convolution with $\phi_T$ behaves at the base point as $T \to 0$. 
(If the functions are base point independent, as is the case for example for \<1> and \<2>,
then these are equivalent to a Hölder norm of order $\deg \tau - \kappa$, where $\deg \<1> = -\frac12$, the degree is multiplicative, and solving the heat equation increases degree by $2$.)
See \cite[Eq.~2.13--2.19]{MW18a} for details of these definitions (note that their integer
multiples of $\eps$ are replaced by $\kappa$ in our notation). One has for example 
\begin{equs}{}
[\<2>]_\kappa &= \sup_{x \in P}\sup_{T< 1} T^{1+\kappa} \left|\int \<2>(y)\phi_T(y-x)\,dy\right|\;, \\{}
[\<22>]_{\kappa} &= \sup_{x \in P} \sup_{T< 1} T^{\kappa} \left | \int \left( \<20>(y)\<2>(y) - C_2^{(\eps)} - \<20>(x) \<2>(y) \right) \phi_T(y-x)\, dy \right|\;.
\end{equs}
Finally, since the tree $\<1>$ naturally plays a distinguished role in the equation for $u - \<1>$ because of the non-local term in the nonlinearity, we measure its regularity in a slightly stronger norm than \cite{MW18a} in order to get good bounds, and we set
$$[\<1>]_\kappa = \sup_{t \in [0,1]} \|\<1>(t, \cdot)\|_{C^{-\frac12-\kappa}(\Torus)}\;.$$
\begin{theorem}\label{t:continuum bound}
	Fix a smooth function $\psi: \mathbb{T}^3 \to \mathbb{R}$ such that $\|\psi\|_\infty, \|D\psi\|_\infty \leq 1$ and fix also $\beta > 0$ and $\kappa > 0$ small enough. If $u$ is the solution of \eqref{e:smooth Psi renorm} for this $\psi$ and $v = u - \<1>$, then for all $R \in (0,1)$ one has the bound
\begin{equ}[e:boundvfinal]
\|v\|_{P_R} \le C\max\Big\{ R^{-1},[\tau]_{\kappa}^\frac{2}{n_\tau (1-\kappa)}; \tau \in \fT\Big\}\;,
\end{equ}
	where $n_\tau$ is the number of leaves appearing in $\tau$. Here $C$ is a constant that is independent of $n$ and $\eps$, and $\|v\|_{P_R}$ denotes the supremum norm over $P_R$.
\end{theorem}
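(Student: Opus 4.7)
\emph{Strategy.} The plan is to follow the Moinat--Weber ``coming down from infinity'' iteration of \cite{MW18a} for the PDE satisfied by $v \defeq u - \<1>$, tracking the extra contribution of the non-local nonlinearity. Subtracting $(\partial_t - \Delta)\<1> = \xi_\eps$ from \eqref{e:smooth Psi renorm} and expanding $-u^3$ via $\<1>^2 = \<2> + C_1^{(\eps)}$ and $\<1>^3 = \<3> + 3C_1^{(\eps)} \<1>$ yields
\begin{equation*}
(\partial_t - \Delta) v = -v^3 - 3\<1> v^2 - 3\<2> v - \<3> - 9C_2^{(\eps)}(v + \<1>) + \beta \psi F_n'(\langle v + \<1>, \psi \rangle)\;.
\end{equation*}
Every term except the last is exactly the one appearing in the equation treated in \cite{MW18a}.

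\emph{Bounding the new nonlinearity uniformly in $n$.} The construction of $F_n$ in Section~\ref{s:Main Result} gives $|F_n'(x)| \le |x|^3$ for all $x \in \R$ and all $n \in \NBar$: this is an equality on $\{|x| \le n\}$, while on $\{|x| > n\}$ the stated bound $|F_n'(x)| \le n^3$ combined with $|x|>n$ gives $|F_n'(x)| \le |x|^3$. Together with the hypothesis $\|\psi\|_\infty \vee \|D\psi\|_\infty \le 1$, which implies both $\|\psi\|_{L^1(\Torus)} \le 1$ and (by interpolation) $\|\psi\|_{C^{1/2+\kappa}(\Torus)} \lesssim 1$, and with the definition $[\<1>]_\kappa = \sup_t \|\<1>(t,\cdot)\|_{C^{-1/2-\kappa}(\Torus)}$, one obtains the uniform-in-$n$ pointwise bound
\begin{equation*}
\bigl|\beta\, \psi(x)\, F_n'\bigl(\langle v(t) + \<1>(t), \psi\rangle\bigr)\bigr| \le C\beta \|v(t,\cdot)\|_\infty^3 + C\beta [\<1>]_\kappa^3\;.
\end{equation*}

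\emph{Running the iteration and absorbing the new term.} With the new forcing thus controlled, the argument of \cite{MW18a} can be run with only cosmetic modifications: since we work on the whole torus, their spatial cut-offs become redundant, which is precisely why Lemma~\ref{l: maximum estimate} is stated here in the simpler geometry. The paracontrolled expansion of $v$ in terms of the higher-order trees $\{\<20>, \<30>, \<22>, \<31>, \<32>\}$ regularises the singular forcings $3\<1> v^2 + 3\<2> v + \<3>$ and cancels the renormalisation constant $9C_2^{(\eps)}$ exactly as in \cite{MW18a}, while the additional non-local term is a classical bounded function that is simply carried through the iteration. At the final application of Lemma~\ref{l: maximum estimate}, the new forcing contributes $(C\beta)^{1/3}(\|v\|_P + [\<1>]_\kappa)$ to the right-hand side; choosing $\beta$ sufficiently small (depending only on the universal constants from \cite{MW18a}) absorbs $(C\beta)^{1/3}\|v\|_P$ by a standard bootstrap on shrinking parabolic cylinders, and the residual $(C\beta)^{1/3}[\<1>]_\kappa$ is dominated by $R^{-1} \vee [\<1>]_\kappa^{2/(1-\kappa)}$ and hence subsumed into the right-hand side of \eqref{e:boundvfinal}. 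The main obstacle is to verify that this absorption is compatible with the iterative structure of \cite{MW18a} rather than only with a single application of the maximum estimate; however, since the non-local term is a genuinely bounded function (in contrast to the distribution-valued forcings at the heart of \cite{MW18a}), it causes no additional difficulty beyond the smallness condition on $\beta$.
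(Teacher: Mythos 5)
Your proposal takes essentially the same route as the paper's proof: follow the Moinat--Weber iteration, use the uniform-in-$n$ bound $|F_n'(x)| \le |x|^3$ together with $\|\psi\|_\infty,\|D\psi\|_\infty \le 1$ to control the new non-local forcing by a multiple of $\|v\|_D^3 + [\<1>]_\kappa^3$, and absorb the cubic contribution by taking $\beta$ small. The only difference is bookkeeping --- the paper invokes the standing contradiction assumption to replace $[\<1>]_\kappa$ by a power of $\|v\|_D$ inside the estimate (producing exactly the $T^{-2}\|v\|_D$ scaling required at [MW18a, Sec.~4.2] and [MW18a, Eq.~4.28]), whereas you carry $[\<1>]_\kappa$ through separately and subsume it into the final right-hand side, which is logically equivalent.
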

\begin{proof}
This follows with only very minor modifications of the proof of \cite[Thm~2.1]{MW18a}. Indeed, once one replaces applications of \cite[Lemma 2.7]{MW18a} with the equivalent application of our Lemma \ref{l: maximum estimate},
one only has to make adjustments to deal with the extra term appearing in the non-linearity of 
our equation. This only requires small changes in Section~4.2 of that paper since, once
one derives a similar bound to that given in the conclusion of that subsection, one can proceed 
with the rest of the proof with no significant changes. 
	
	The structure of the proof there is to assume that the bound 
\begin{equ}[e:contrav]
	\|v\|_{P_R} \leq 1 \vee C \max\Big\{[\tau]_{\kappa}^\frac{2}{n_\tau (1 -\kappa)}; \tau \in \fT\Big\}\;,
\end{equ}
	fails on some parabolic cylinder with a constant $C$ that depends only on combinatorial factors arising during their proof and then derive from the converse inequality a bound of order $R^{-1}$,
thus yielding \eqref{e:boundvfinal}. 
	
There are only two steps in the proof which rely on the precise form of the equation 
under consideration and not just on the local expansion of the solution up to order $1$ 
(which has the precise same form for \eqref{e:TPsi} as for \eqref{e:Phi}).
The first step, which is  given in their Section~4.2, is to consider
the two-parameter function $U$ given by
\begin{equ}
U(x,y) = v(y) - v(x) + \<30>(y) - \<30>(x) + 3v(x) \bigl(\<20>(y)-\<20>(x)\bigr)\;,
\end{equ}
see \cite[Eq.~2.29]{MW18a}, to fix an open space-time domain $D = P_R$ (for some $R > 0$),
and to assume a bound of the type
$[\tau]_{\kappa} \le c \|v\|_D^{n_\tau (1-\kappa)/2}$ for some
$c \le 1$. Writing $(\cdot)_T$ for space-time convolution with $\phi_T$, one then shows that
 there exists $\gamma \ge 0$ such that a bound of the form
\begin{equ}[e:wantedBoundU]
T^2\|\big((\partial_t - \Delta)U(x,\cdot)\big)_T\|_{B(x,L)} \lesssim  \|v\|_D (L/T)^\gamma\;,
\end{equ}
holds uniformly over all choices of $D$ with diameter bounded by $1$, all $x \in D$, and all 
$T \le L \le 1\wedge \|v\|_D^{-1}$ such that $B(x,2L) \subset D$, where $B(x,2L)$
denotes the \textit{parabolic} ball of radius $2L$ ``directed towards the past'', see
\cite[Eq.~2.2]{MW18a}.

\begin{remark}
The bound given in \cite[Eq.~4.20]{MW18a} appears stronger because of the presence
of the constant $c$ which can be made arbitrarily small. That bound however is incorrect since
the first term in \cite[Eq.~4.8]{MW18a} does not satisfy it. Fortunately, this
additional factor $c$ is not exploited in the sequel.
\end{remark}
	
The only difference between $(\partial_t - \Delta)U$ in the case of \eqref{e:TPsi} compared to 
that of \eqref{e:Phi} is that we obtain an additional term 
$\beta F_n'(\langle v + \<1>, \psi \rangle) \psi$, which is easily bounded by
	\begin{equs}
	\beta\|F_n'(\langle v + \<1>, \psi \rangle) \psi \|_{D} &\leq \beta\|\langle v + \<1>, \psi \rangle^3 \psi\|_{D} 
	 \lesssim \sum_{j = 0}^3 \|\langle v, \psi \rangle^j \|_{P_R} \|\langle \<1>, \psi\rangle^{3-j}\|_{D} \\
	 &\lesssim \|v\|_D^3 + [\<1>]_{\kappa}^3
	 \lesssim T^{-2} \|v\|_D \;,
	\end{equs}
	with constants uniform in $n$. Here, we made use of the bounds $T \le 1 \wedge \|v\|_D^{-1}$ and  $[\<1>]_\kappa \lesssim \|v\|_D^{1/2-\kappa} \le \|v\|_D$, where the last inequality follows
	from the fact that \eqref{e:contrav} is assumed to fail. In particular, this merely contributes in
	an increase of the proportionality constant appearing in \eqref{e:wantedBoundU}.
	
The other step where the precise form of the equation matters is the bound
\cite[Eq.~4.28]{MW18a}, where the maximum appearing on the right-hand side should include
an additional term $\big\|\beta \big(F_n'(\langle v + \<1>, \psi \rangle) \psi\big)_T\big\|_{D'}^{1/3}$ coming from 
our additional nonlinearity (here the domain $D'$ is equal to $P_{r+R'}$ in their notation). 
Similarly to above, this is bounded by some multiple of $\beta^{1/3} \|v\|_{D'}$.
By choosing $\beta$ to be sufficiently small we can guarantee that this is bounded by 
$\frac12\|v\|_{D'}$, so that the bound \cite[Eq.~4.33]{MW18a} does indeed still hold.
This allows to show that the required bound $\|v\|_{D} \lesssim R^{-1}$ holds in the 
same way as \cite{MW18a}. 
\end{proof}

In Section~\ref{s: Extensions}, we also required a version of Theorem~\ref{t:continuum bound} that incorporated localisation in space. Whilst this result is closer in flavour to that of \cite{MW18a}, it poses one additional difficulty not present above. Since the non-linearity we consider is not local in space, one cannot hope to have control on the behaviour of $v$ without information from the entirety of the support of $\psi$ so that complete localisation in space is not possible. However, despite this barrier, a sufficiently strong result for our purposes is available.

\begin{theorem}\label{t: localised continuum bound}
		Fix a smooth, compactly supported function $\psi: \mathbb{R}^3 \to \mathbb{R}$ such that $\|\psi\|_\infty, \|D\psi\|_\infty \leq 1$ and fix also $\beta > 0$ and $\kappa > 0$ small enough. Additionally define $Q_R = (R^2, 1) \times [-N + R, N-R]^3$ for $N$ chosen so that $\operatorname{supp} \psi \subseteq \subseteq [-N,N]^3$. Then if $u$ solves \eqref{e:smooth Psi renorm} on $Q_0$ for this $\psi$ and $v = u - \<1>$, then for all $R$ satisfying $0 < R < c_0$ for a suitable sufficiently small $c_0$ one has the bound
		\begin{equ}[e:boundvfinal]
			\|v\|_{Q_R} \le C\max\Big\{ R^{-1},[\tau]_{\kappa, N}^\frac{2}{n_\tau (1-\kappa)}; \tau \in \fT\Big\}\;,
		\end{equ}
		where $n_\tau$ is the number of leaves appearing in $\tau$ and for $\tau \in \fT$, $[\tau]_{\kappa, N}$ is defined analogously to $[\tau]_\kappa$ but restricting spatial suprema to $[-N,N]^3$. Here $C, c_0$ are constants that are independent of $n$ and $\eps$, and $\|v\|_{Q_R}$ denotes the supremum norm over $Q_R$.
\end{theorem}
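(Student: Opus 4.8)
The plan is to follow the proof of \cite[Thm~2.1]{MW18a}, whose statement already incorporates a spatial localisation: there the bound is obtained on a cylinder at parabolic distance $R$ from the boundary of the cylinder on which the equation is posed, and the maximum principle used is \cite[Lemma~2.7]{MW18a} in its original, spatially localised form rather than its torus version Lemma~\ref{l: maximum estimate}. Onto this skeleton one grafts the treatment of the non-local term $\beta F_n'(\langle v + \<1>,\psi\rangle)\psi$ exactly as in the proof of Theorem~\ref{t:continuum bound}. The only genuinely new structural feature is that this term depends on $v$ through all of $\operatorname{supp}\psi$, so space cannot be localised below $\operatorname{supp}\psi$. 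To handle this I would first fix $c_0 > 0$ small enough that $\operatorname{supp}\psi$ is compactly contained in $(-N+c_0, N-c_0)^3$; this is possible precisely because $\operatorname{supp}\psi$ is compactly contained in $[-N,N]^3$. Then, for every $R < c_0$, the function $\psi$ is supported in the spatial slice of $Q_R$, and the non-local forcing only ever involves the values of $v$ on $Q_R$.

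With $c_0$ fixed, the next step is to run the contradiction argument of \cite{MW18a} on $Q_0$ while restricting throughout to sub-cylinders $D$ whose spatial slice contains $\operatorname{supp}\psi$ (equivalently, contains $[-N+R, N-R]^3$). For such a $D$, writing $T$ for the scale of the local analysis and assuming, as in \cite{MW18a}, that the conclusion \eqref{e:boundvfinal} fails --- so that in particular $\|v\|_D \ge 1$ and $[\tau]_{\kappa, N} \le c\,\|v\|_D^{n_\tau(1-\kappa)/2}$ for $\tau \in \fT$ --- one has, uniformly in $n$,
\begin{equs}
\beta\big\| F_n'(\langle v + \<1>, \psi \rangle)\,\psi \big\|_D \;\lesssim\; \beta\big(\|v\|_D^3 + [\<1>]_{\kappa, N}^3\big) \;\lesssim\; \beta\,T^{-2}\,\|v\|_D\;,
\end{equs}
using $|F_n'(y)| \le |y|^3$, the fact that $D$ contains $\operatorname{supp}\psi$ in space so that $|\langle v,\psi\rangle| \lesssim \|v\|_D$ at each time, the bound $[\<1>]_{\kappa,N} \lesssim \|v\|_D$ (exactly as in Theorem~\ref{t:continuum bound}), and $T \le 1\wedge\|v\|_D^{-1}$. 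This is precisely the estimate reached in the proof of Theorem~\ref{t:continuum bound}, so the two points where the precise form of the equation is used --- the estimate on $(\partial_t-\Delta)U$ in Section~4.2 of \cite{MW18a} (their (4.20), our \eqref{e:wantedBoundU}) and the maximum-principle step at \cite[Eq.~(4.28)]{MW18a} --- absorb the extra contribution in the same way: in the former it only inflates the proportionality constant, and in the latter it contributes a term bounded by $\beta^{1/3}\|v\|_{D'}$ on the relevant enlarged domain $D'$, which is at most $\frac12\|v\|_{D'}$ once $\beta$ is small. All remaining ingredients --- the reconstruction bound of \cite[Thm~2.8]{MW18a}, the Schauder estimates, and the iteration over dyadic scales --- are purely local and apply verbatim with every spatial supremum confined to $[-N,N]^3$. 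This gives $\|v\|_{Q_R} \lesssim R^{-1}$ whenever \eqref{e:boundvfinal} fails, which is the claimed bound.

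The step I expect to be the main obstacle is reconciling \cite{MW18a}'s contradiction scheme --- which localises the cylinder $D$ around a near-maximal point of $v$ and lets it shrink --- with the requirement that $\operatorname{supp}\psi$ stay inside every $D$ under consideration. The resolution is that the non-local term is constant in space up to the fixed factor $\psi$, so on a small cylinder $D$ its size is governed by the supremum of $v$ over the fixed set $\operatorname{supp}\psi \subset Q_R$ rather than by the values of $v$ on $D$ itself; since, for the admissible (large) cylinders, this supremum is still dominated by $\|v\|_D$, the power counting of \cite{MW18a} is untouched and the circular estimate above is legitimate. A secondary, routine point is that $u$ is only assumed to solve \eqref{e:smooth Psi renorm} on $Q_0$ rather than on a torus, so one must check that all the local reconstruction and Schauder estimates invoked reference only the behaviour of $u$ on $[-N,N]^3 \times (0,1)$; since those estimates are formulated on arbitrary parabolic cylinders in \cite{MW18a}, this needs nothing beyond bookkeeping. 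Fixing $c_0$ as above and $\beta$ sufficiently small then yields the bound with constants $C$ and $c_0$ independent of $n$ and $\eps$.
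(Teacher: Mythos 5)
Your proposal is correct and follows essentially the same route as the paper: choose $c_0$ so that $\operatorname{supp}\psi$ is compactly contained in the spatial slice of $Q_R$ for all $R<c_0$, reuse the two modifications from the proof of Theorem~\ref{t:continuum bound} to absorb the non-local term, and prevent the shrinking cylinders of the contradiction scheme from losing $\operatorname{supp}\psi$ — your ``restrict to sub-cylinders $D$ containing $\operatorname{supp}\psi$'' is precisely the paper's additional termination condition in the recursion of \cite[Sec.~4.6]{MW18a}. The only cosmetic difference is that you explicitly invoke the spatially localised form of \cite[Lemma~2.7]{MW18a} rather than Lemma~\ref{l: maximum estimate}, which is the natural choice here and consistent with the paper's intent.
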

\begin{proof}
	As in the proof above, this follows with only minor modifications to the proof of \cite[Thm~2.1]{MW18a}. In fact, the only additional modification necessary to those already given in the proof of Theorem~\ref{t:continuum bound} is to restrict consideration to $R$ sufficiently small so that $\operatorname{supp} \psi \subseteq Q_R$ and to add as an additional termination condition in the recursive step of their argument in their Section 4.6 the condition that (in the notation of that section), $R_n$ is such that $\operatorname{supp}\psi \not \subseteq Q_{R_n}$. 
\end{proof}
\bibliographystyle{Martin}
\bibliography{bib}

\appendix
\section{Convergence of Discretisations of the Initial Condition}

This appendix is an addition to the version of this paper published in the Journal of Statistical Physics which includes the calculation necessary to obtain convergence of our choice of discretisation of the initial condition. We include it in the hope that it may be useful to those aiming to use similar tools.

Our aim is to show that for $\zeta \in C^{\alpha'}$, $\iota^\varepsilon \FP_\varepsilon \zeta \to \zeta$ in a suitable sense. We will write $\zeta^\varepsilon = \FP_\varepsilon \zeta$.

We note that 
$$\scal{\zeta - \iota^\varepsilon \zeta^\varepsilon, \psi_x^\lambda} = \zeta \left ( \sum_{y \in \Lambda_\varepsilon^3} \varepsilon^{-3} 1_{\square_\varepsilon^y}(\cdot) \int_{\square_\varepsilon^y} \psi_x^\lambda(\cdot) - \psi_x^\lambda(z) dz \right ).$$

Hence, it is natural to consider $\| \Psi_\varepsilon^\lambda \|_{\CB_{1,1}^{\frac12 + \bar{\kappa}}}$ where $$\Psi_\varepsilon^\lambda \defeq \sum_{y \in \Lambda_\varepsilon^3} \varepsilon^{-3} 1_{\square_\varepsilon^y}(\cdot) \int_{\square_\varepsilon^y} \psi_x^\lambda(\cdot) - \phi_x^\lambda(z) dz.$$

We have the following result. 
\begin{lemma}
	Let $\bar{\kappa} < \frac12 \kappa$ and let $\varepsilon \le \lambda$. Then $\| \Psi_\varepsilon^\lambda \|_{\CB_{1,1}^{\frac12 + \bar{\kappa}}} \lesssim \lambda^{-\frac12 - \kappa} \varepsilon^{\kappa - 2 \bar{\kappa}}$.
\end{lemma}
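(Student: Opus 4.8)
The plan is to recognise $\Psi_\varepsilon^\lambda$ as $g-P_\varepsilon g$, where $g\defeq\psi_x^\lambda$ and $P_\varepsilon$ is the block-averaging operator $P_\varepsilon u=\sum_{y\in\Lambda_\varepsilon^3}\bigl(\varepsilon^{-3}\int_{\square_\varepsilon^y}u\bigr)\mathbf 1_{\square_\varepsilon^y}$ (so that $\iota^\varepsilon\FP_\varepsilon=P_\varepsilon$). To control the $\CB_{1,1}^{\frac12+\bar\kappa}$ norm I would use the characterisation via the first-order $L^1$ modulus of continuity: for $s\in(0,1)$ one has the equivalent norm $\|f\|_{\CB_{1,1}^s}\sim\|f\|_{L^1}+\int_0^1 t^{-s}\,\omega_1(f,t)\,\tfrac{dt}{t}$, where $\omega_1(f,t)\defeq\sup_{|h|\le t}\|f(\cdot+h)-f\|_{L^1}$ (equivalently $\CB^s_{1,1}=W^{s,1}$, the Sobolev--Slobodeckij space). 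Since $s=\tfrac12+\bar\kappa\in(0,1)$, it then suffices to prove the two estimates $\|\Psi_\varepsilon^\lambda\|_{L^1}\lesssim\varepsilon\lambda^{-1}$ and $\omega_1(\Psi_\varepsilon^\lambda,t)\lesssim\lambda^{-1}\min(t,\varepsilon)$.

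Both rest only on the elementary facts $\|g\|_\infty\lesssim\lambda^{-3}$, $\|\nabla g\|_\infty\lesssim\lambda^{-4}$ and $\operatorname{supp}g\subseteq B(x,\lambda)$; since $\varepsilon\le\lambda$, $\Psi_\varepsilon^\lambda$ is then supported in a ball of radius $\lesssim\lambda$. On each cell, $|g-P_\varepsilon g|\le\operatorname{osc}_{\square_\varepsilon^y}g\lesssim\varepsilon\|\nabla g\|_\infty\lesssim\varepsilon\lambda^{-4}$, which gives $\|\Psi_\varepsilon^\lambda\|_{L^1}\lesssim\varepsilon\lambda^{-4}\cdot\lambda^3=\varepsilon\lambda^{-1}$ and hence also $\omega_1(\Psi_\varepsilon^\lambda,t)\le 2\|\Psi_\varepsilon^\lambda\|_{L^1}\lesssim\varepsilon\lambda^{-1}$, which is the required bound for $t\ge\varepsilon$. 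For $t<\varepsilon$ and $|h|\le t$ I would split $\Psi_\varepsilon^\lambda(\cdot+h)-\Psi_\varepsilon^\lambda=(g(\cdot+h)-g)-(P_\varepsilon g(\cdot+h)-P_\varepsilon g)$: the first bracket is $\lesssim|h|\lambda^{-4}$ pointwise on a set of volume $\lesssim\lambda^3$, contributing $\lesssim|h|\lambda^{-1}$ in $L^1$; the second bracket vanishes except on the $|h|$-collar of the grid faces meeting $\operatorname{supp}g$ (volume $\lesssim\lambda^3|h|/\varepsilon$) and there equals a difference of averages of $g$ over neighbouring cells, of size $\lesssim\varepsilon\lambda^{-4}$, contributing $\lesssim\lambda^3(|h|/\varepsilon)\cdot\varepsilon\lambda^{-4}=|h|\lambda^{-1}$. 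Thus $\omega_1(\Psi_\varepsilon^\lambda,t)\lesssim t\lambda^{-1}$ for $t<\varepsilon$. I expect this last point — quantifying that translating the piecewise-constant function $P_\varepsilon g$ by $|h|<\varepsilon$ only disturbs it on a thin collar of the cell boundaries, where the disturbance is a difference of adjacent cell averages — to be the only step requiring genuine care; everything else is routine scaling bookkeeping. (One may alternatively write $P_\varepsilon g(\cdot+h)=P_\varepsilon^h[g(\cdot+h)]$ for the $h$-shifted grid, peel off $P_\varepsilon[g(\cdot+h)-g]$, and bound $P_\varepsilon^h-P_\varepsilon$ by the overlap fraction $\lesssim|h|/\varepsilon$.)

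Finally I would feed these two estimates into the modulus-of-continuity formula:
\[
\int_0^1 t^{-s-1}\lambda^{-1}\min(t,\varepsilon)\,dt=\lambda^{-1}\Bigl(\int_0^\varepsilon t^{-s}\,dt+\varepsilon\int_\varepsilon^1 t^{-s-1}\,dt\Bigr)\lesssim\lambda^{-1}\varepsilon^{1-s}=\lambda^{-1}\varepsilon^{\frac12-\bar\kappa}\;,
\]
and, combining with $\|\Psi_\varepsilon^\lambda\|_{L^1}\lesssim\varepsilon\lambda^{-1}\le\lambda^{-1}\varepsilon^{\frac12-\bar\kappa}$, conclude $\|\Psi_\varepsilon^\lambda\|_{\CB_{1,1}^{\frac12+\bar\kappa}}\lesssim\lambda^{-1}\varepsilon^{\frac12-\bar\kappa}$. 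To match the stated bound I then use $\varepsilon\le\lambda\le1$ and $\kappa<\tfrac12$ to get $\lambda^{-\frac12+\kappa}\le\varepsilon^{-\frac12+\kappa}$, and $\bar\kappa<\tfrac12\kappa$ (so $0<\kappa-2\bar\kappa\le\kappa-\bar\kappa$) to get
\[
\lambda^{-1}\varepsilon^{\frac12-\bar\kappa}=\lambda^{-\frac12-\kappa}\cdot\lambda^{-\frac12+\kappa}\varepsilon^{\frac12-\bar\kappa}\le\lambda^{-\frac12-\kappa}\varepsilon^{\kappa-\bar\kappa}\le\lambda^{-\frac12-\kappa}\varepsilon^{\kappa-2\bar\kappa}\;,
\]
which is exactly the claimed estimate.
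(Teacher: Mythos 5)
Your proof is correct, but it takes a genuinely different route from the one in the paper. The paper attacks the $\CB_{1,1}^{\frac12+\bar\kappa}$ norm through a local-means characterisation, bounding $\int_0^1\int\delta^{-\frac32-\bar\kappa}\sup_{\eta\in\CB_r^0}|\scal{\Psi_\varepsilon^\lambda,\eta_u^\delta}|\,du\,d\delta$ directly; this forces a three-regime case analysis according to the position of the test scale $\delta$ relative to $\varepsilon$ and $\lambda$, and the delicate regime $\delta<\varepsilon$ is handled by using that $\eta$ annihilates constants, so that the pairing localises to a single cell except on an $O(\delta)$-collar of the grid faces. You instead invoke the equivalence of $\CB_{1,1}^{s}$, $s\in(0,1)$, with the norm built from the first-order $L^1$ modulus of continuity, which reduces the whole lemma to the two elementary estimates $\|\Psi_\varepsilon^\lambda\|_{L^1}\lesssim\varepsilon\lambda^{-1}$ and $\omega_1(\Psi_\varepsilon^\lambda,t)\lesssim\lambda^{-1}\min(t,\varepsilon)$; the grid-face collar that the paper sees at test scale $\delta<\varepsilon$ reappears, correctly quantified, in your treatment of translations by $|h|<\varepsilon$. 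Both arguments in fact first produce the slightly sharper intermediate bound $\lambda^{-1}\varepsilon^{\frac12-\bar\kappa}$ (regime by regime in the paper's case) and then weaken it to the stated form using $\varepsilon\le\lambda\le1$ and $\kappa<\frac12$, exactly as in your final display. What your approach buys is the elimination of any case analysis in the test scale and of the vanishing-moment bookkeeping, at the price of invoking the standard equivalence of $\CB_{1,1}^s$ with the Sobolev--Slobodeckij norm; since the lemma is only consumed via duality in Corollary~\ref{c:init discretisation}, which is insensitive to the choice of equivalent norm, your proof can be substituted without any change downstream.
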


\begin{proof}
	First we consider the regime $0 < \delta < \varepsilon < \lambda$ which is the least straightforward.
	
	We write 
	\begin{equs}
		\scal{\Psi_\varepsilon^\lambda, \eta_u^\delta} & = \sum_{\substack{y \in \Lambda_\varepsilon^3 \\ |y-x| \lesssim \lambda}} \varepsilon^{-3} \iint 1_{\square_\varepsilon^y \times \square_\varepsilon^y}(v,z) [\psi_x^\lambda(v) - \psi_x^\lambda(z)] \eta_u^\delta (v) dv dz
	\end{equs}
	and estimate each term in the sum separately (but uniformly over choices of $\eta \in \CB_r^0$).
	
	If $B(u, \delta) \subseteq \square_\varepsilon^y$ then since $\eta$ annihilates constants, we can write
	\begin{equs}
		\varepsilon^{-3} \iint 1_{\square_\varepsilon^y \times \square_\varepsilon^y}(v,z) [\psi_x^\lambda(v) - & \psi_x^\lambda(z)]  \eta_u^\delta (v) dv dz \\ & = \int 1_{\square_\varepsilon^y}(v) [\psi_x^\lambda(v) - \psi_x^\lambda(u)] \eta_u^\delta (v) dv
	\end{equs}
	and utilise a straightforward H\"older estimate on $\psi_x^\lambda$ and the fact that $\eta_u^\delta$ has uniformly bounded $L^1$-norm to obtain a bound of order $\lambda^{-4} \delta$ for $u$ in this region.
	
	Now if the integral is non-zero and $u$ is not in the region considered above then $\varepsilon - \delta \lesssim |u-y| \lesssim \varepsilon + \delta$. In this region, applying the brutal H\"older estimate to $\psi_x^\lambda$ yields a bound of order $\lambda^{-4} \varepsilon$. 
	
	Therefore 
	\begin{equs}
		 \int_0^\varepsilon \int &\delta^{-\frac32 - \bar{\kappa}} \sup_{\eta \in \CB_r^0}  |\scal{\Phi_\varepsilon^\lambda, \eta_u^\delta}| du d\delta
		\\
		& \lesssim \int_0^\varepsilon \delta^{-\frac32 - \bar{\kappa}} \sum_{\substack{y \in \Lambda_\varepsilon^3 \\ |y-x| \lesssim \lambda}} \int \left ( \lambda^{-4} \delta 1_{u \in B(y, c\varepsilon)} + \lambda^{-4} \varepsilon 1_{\varepsilon - \delta \lesssim |u-y| \lesssim \varepsilon + \delta} \right ) du d\delta 
		\\
		& \lesssim \int_0^\varepsilon \delta^{-\frac12 - \bar{\kappa}} \lambda^{-1} d \delta
		 \lesssim \varepsilon^{\frac12 - \bar{\kappa}} \lambda^{-1}
		 \lesssim \varepsilon^{\kappa - \bar{\kappa}} \lambda^{-\frac12 - \kappa}\;,
	\end{equs}
	which is the required bound.
	
	The remaining regimes are simpler. In the regime $\varepsilon < \delta < \lambda$, applying the brutal H\"older bound to the term in $\psi_x^\lambda$ and using the uniform control on the $L^1$ of $\eta_u^\delta$ immediately yields 
	\begin{equs}
		|\scal{\Phi_\varepsilon^\lambda, \eta_u^\delta}| \lesssim \begin{cases}
			\varepsilon \lambda^{-4} \text{ if } |u - x| \lesssim \lambda,
			\\
			0 \text{ otherwise,}
		\end{cases}
	\end{equs}
	so that 
	\begin{equs}
		\int_\varepsilon^\lambda \int \delta^{-\frac32 - \bar{\kappa}}  \sup_{\eta \in \CB_r^0}  |\scal{\Phi_\varepsilon^\lambda, \eta_u^\delta}| du d\delta & \lesssim \int_\varepsilon^\lambda \varepsilon \lambda^{-1} \delta^{- \frac 32 - \bar{\kappa}} d \delta
		\\
		& \lesssim \lambda^{-\frac12 - \kappa} \varepsilon^{\kappa - \bar{\kappa}}.
	\end{equs}
	
	Finally, in the regime where $\varepsilon < \lambda < \delta$, we estimate $|\scal{\Psi_\varepsilon^\lambda, \eta_u^\delta}|$ by applying the brutal H\"older estimate to the term in $\psi_x^\lambda$, using $|\eta_u^\delta(v)| \lesssim \delta^{-3}$ and then using the straightforward $L^1$ control on the indicator function to get a bound of order $\lambda^{-1} \varepsilon \delta^{-3}$. 
	
	Since $\scal{\Psi_\varepsilon^\lambda, \eta_u^\delta} = 0$ for $\delta \lesssim |u - x|$, this implies that
	\begin{equs}
		\int_\lambda^1 \int \delta^{-\frac32 - \bar{\kappa}}  \sup_{\eta \in \CB_r^0}  |\scal{\Phi_\varepsilon^\lambda, \eta_u^\delta}| du d\delta & \lesssim \int_\lambda^1 \varepsilon \lambda^{-1} \delta^{-\frac32 - \bar{\kappa}} d \delta
		\\ 
		& \lesssim \lambda^{-\frac12 - \kappa} \varepsilon^{\kappa - 2 \bar{\kappa}}.
	\end{equs}
Collecting all these bounds implies the claim.
\end{proof}
Combining this lemma with standard duality results for Besov spaces then yields the following.
\begin{corollary}\label{c:init discretisation}
	$$\| \zeta ; \zeta^\varepsilon \|_{- \frac12 - \kappa}^{(\varepsilon)} \defeq \sup_{ x \in \DTorus} \sup_{\lambda \in (\varepsilon, 1]} \sup_{\psi \in \CB_r^0} \lambda^{\frac12 + \kappa} | \scal{\zeta - \iota^\varepsilon \zeta^\varepsilon, \psi_x^\lambda}| \to 0$$ as $\varepsilon \to 0$. 
\end{corollary}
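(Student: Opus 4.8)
The plan is to transfer the statement, by Besov duality, onto the estimate of the preceding Lemma. Recall that, as displayed above, for every $\psi \in \CB_r^0$, $x \in \DTorus$ and $\lambda \in (0,1]$ one has the identity $\langle \zeta - \iota^\varepsilon \zeta^\varepsilon, \psi_x^\lambda\rangle = \langle \zeta, \Psi_\varepsilon^\lambda\rangle$, where $\Psi_\varepsilon^\lambda = (\mathrm{Id} - \iota^\varepsilon \FP_\varepsilon)\psi_x^\lambda$ is precisely the discretisation-error function appearing in the Lemma; this is immediate from the definitions of $\iota^\varepsilon$ and $\FP_\varepsilon$. It therefore suffices to bound $\lambda^{\frac12+\kappa}|\langle \zeta, \Psi_\varepsilon^\lambda\rangle|$ by a quantity tending to $0$ as $\varepsilon \to 0$, uniformly over $\psi \in \CB_r^0$, $x \in \DTorus$ and $\lambda \in (\varepsilon,1]$.

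Since $\zeta \in C^{\alpha'} = \CB_{\infty,\infty}^{\alpha'}$ and $-\alpha' = \tfrac12 + \tfrac\kappa2 > 0$, the standard Besov duality $(\CB_{1,1}^{-\alpha'})' = \CB_{\infty,\infty}^{\alpha'}$ gives $|\langle \zeta, \Psi_\varepsilon^\lambda\rangle| \lesssim \|\zeta\|_{C^{\alpha'}}\, \|\Psi_\varepsilon^\lambda\|_{\CB_{1,1}^{1/2+\kappa/2}}$. The Lemma provides, for any fixed $\bar\kappa \in (0,\tfrac\kappa2)$, the bound $\|\Psi_\varepsilon^\lambda\|_{\CB_{1,1}^{1/2+\bar\kappa}} \lesssim \lambda^{-1/2-\kappa}\varepsilon^{\kappa-2\bar\kappa}$; and, because $\Psi_\varepsilon^\lambda$ is constant on the cubes of the grid $\DTorus$ of mesh $\varepsilon$, an elementary Bernstein-type inequality (splitting the Littlewood--Paley series at frequency $\varepsilon^{-1}$, the low-frequency blocks picking up a factor $\varepsilon^{-(\kappa/2-\bar\kappa)}$ and the high-frequency blocks, carried by the jumps of $\Psi_\varepsilon^\lambda$, summing to a convergent geometric tail of the same order) upgrades this to $\|\Psi_\varepsilon^\lambda\|_{\CB_{1,1}^{1/2+\kappa/2}} \lesssim \lambda^{-1/2-\kappa}\varepsilon^{\kappa/2-\bar\kappa}$. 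Inserting this into the duality bound yields $\lambda^{\frac12+\kappa}|\langle\zeta,\Psi_\varepsilon^\lambda\rangle| \lesssim \|\zeta\|_{C^{\alpha'}}\, \varepsilon^{\kappa/2-\bar\kappa}$, uniformly over $\psi$, $x$ and $\lambda \in (\varepsilon,1]$, and the right-hand side tends to $0$ since $\bar\kappa < \tfrac\kappa2$; this is the assertion.

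What I would regard as the main, if minor, obstacle is exactly the matching of the Besov regularity indices in the previous paragraph: the Lemma is stated at exponent $\tfrac12+\bar\kappa$ with $\bar\kappa < \tfrac\kappa2$, so that it carries a strictly positive power of $\varepsilon$, whereas pairing against $C^{\alpha'}$ calls for the exponent $\tfrac12+\tfrac\kappa2 = -\alpha'$. Closing this gap is routine but must be done carefully, the one slightly delicate point being that $\Psi_\varepsilon^\lambda$ is not band-limited --- it has genuine high-frequency content from its jumps across cell boundaries --- so one cannot simply quote a Bernstein inequality for band-limited functions but must check that the high-frequency contribution is itself controlled (it decays geometrically above frequency $\varepsilon^{-1}$). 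Alternatively, one verifies that the proof of the Lemma already delivers, at the endpoint exponent $\tfrac12+\tfrac\kappa2$, a bound with a positive power of $\varepsilon$ on tracking the relation $\varepsilon \le \lambda$ throughout its estimates; with either route in hand, the rest is bookkeeping of exponents. (Without such a refinement one only gets uniform boundedness of $\lambda^{\frac12+\kappa}|\langle\zeta,\Psi_\varepsilon^\lambda\rangle|$; convergence could still be recovered from compactness of $C^{\alpha'} \hookrightarrow C^\alpha$ together with the distributional convergence $\iota^\varepsilon\zeta^\varepsilon \to \zeta$, but the quantitative route is cleaner.)
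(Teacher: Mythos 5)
Your proof is correct and follows essentially the same route as the paper: the identity $\langle\zeta-\iota^\varepsilon\zeta^\varepsilon,\psi_x^\lambda\rangle = \langle\zeta,\Psi_\varepsilon^\lambda\rangle$, then the $\CB_{\infty,\infty}^{\alpha'}$--$\CB_{1,1}^{-\alpha'}$ duality combined with the preceding Lemma. You are moreover right to flag the index mismatch (the Lemma is stated at $\tfrac12+\bar\kappa$ with $\bar\kappa<\tfrac\kappa2$, while the duality against $C^{\alpha'}$ requires $-\alpha'=\tfrac12+\tfrac\kappa2$), and either of your fixes closes it --- in particular, rerunning the Lemma's three regimes at the endpoint exponent while tracking $\varepsilon\le\lambda$ still yields a factor $\varepsilon^{\kappa/2}$ in each, so the gap in the paper's one-line proof is real but harmless.
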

\end{document}